\newtheorem{theorem}{Theorem}[section]
\newtheorem{proposition}[theorem]{Proposition}
\newtheorem{lemma}[theorem]{Lemma}
\newtheorem{proof}{\textmd{\textit{Proof.}}}
\newtheorem{remark}[theorem]{Remark}
\newtheorem{definition}[theorem]{Definition}
\newtheorem{acknowledgement}{\textmd{\textit{Acknowledgements.}}}
\newcommand{\qedd}{\hfill \Box}
\newcommand{\ve}{\varepsilon}
\newcommand{\R}{\ensuremath{\mathbb{R}}}
\newcommand{\Sph}{\ensuremath{\mathbb{S}}}
\lstdefinelanguage{Sage}[]{Python}
{morekeywords={False,sage,True},sensitive=true}
\definecolor{dblackcolor}{rgb}{0.0,0.0,0.0}
\definecolor{dbluecolor}{rgb}{0.01,0.02,0.7}
\definecolor{dgreencolor}{rgb}{0.2,0.4,0.0}
\definecolor{dgraycolor}{rgb}{0.30,0.3,0.30}
\title{The geometry of a positively curved Zoll surface of revolution}
\author{By  K. Kiyohara, S. V. Sabau
	\footnote{Corresponding author}
	, K. Shibuya}
\date{}
\begin{document}

\maketitle



\begin{abstract}

In this paper we study the geometry of
the manifolds of geodesics of a Zoll surface of positive Gauss curvature, show how these metrics induce Finsler metrics of constant flag curvature and give some explicit constructions.
\end{abstract}


\section{Introduction}
The study of Riemannian manifolds all of whose geodesics are closed has a long history (see \cite{Be} for historical remarks). In special, Riemannian manifolds whose geodesics are simple closed curves of equal length, manifolds called today {\it Zoll surfaces}, have remarkable geometrical properties extensively studied by many experts (see for instance \cite{Be}, \cite{LBM}, \cite{K1}, \cite{MS1}). 
 
 A Zoll surface of revolution $(\Lambda,g)$ 
 is the surface with the local coordinates
 $
 (r,\theta)\in [0,\pi]\times \R\slash 2\pi \mathbb Z,
 $
 and the metric
 \begin{equation}\label{Zoll metric }
 g=[1+h(\cos r)]^2 dr\otimes dr+\sin^2\ r d\theta\otimes d\theta,
 \end{equation}
 where $h:[-1,1]\to (-1,1)$ is an smooth function such that
 \begin{enumerate}
 	\item $h(-x)=-h(x)$, for any $x\in [-1,1]$, i.e. it is an odd function,
 	\item $h(-1)=h(1)=0$.
 \end{enumerate}
 
 This is a smooth Riemannian metric on $\Sph^2$ regarded as
 $
 (0,\pi)\times \R\slash 2\pi \mathbb Z\cup\{r=0\}\cup\{r=\pi\}.
 $
 
 It is known that, the general Zoll metrics near the standard one are parametrized 
 by odd functions $h:\Sph^2\to \R$, see \cite{Be} or \cite{LBM}.
 
 Amongst many other remarkable geometrical properties of Zoll surfaces we mention the fact that the manifold of oriented geodesics $M$ of a Zoll surface $(\Lambda=\Sph^2,g)$ is a smooth manifold diffeomorphic to $\Sph^2$. It is also known that, for any unit speed geodesic $\gamma$ of $(\Lambda,g)$,  the tangent space $T_{[\gamma]}M$ to $M$ in the point $[\gamma]$ is isomorphic to the space of normal Jacobi fields along $\gamma$ (see for instance \cite{Be}).  
 
 One fundamental question to ask is {\it what kind of natural geometrical structures are carried by the manifold of geodesics M of a Zoll surface $(\Lambda,g)$, and how are these related to the geometry of $(\Lambda,g)$?}
 
 Some answers are already known. For instance, it is known that $M$ can be endowed with a symplectic structure. 
  Moreover, a Riemannian metric is introduced by Besse (\cite{Be}, p. 62) on the manifold of geodesics. 
 
 In the present paper 
 \begin{itemize}
 	\item   we show that  {\it the manifold of geodesics of a positively curved Zoll metric naturally inherits a Finsler structure of constant flag curvature $K=1$.}
 	\item  Moreover, {\it we construct this Finsler structure and study its 
 	geometry.} Some examples are also given.
 \end{itemize}

The idea of existence of Finsler metrics of constant flag curvature on the manifold of geodesics of a Zoll metric is not new, originally belonging to Bryant (\cite{Br2002}). However, the concrete construction of such a Finsler metric, its geometrical properties or examples remain unknown until now. 

The Finsler metrics of positive constant flag curvature constructed in this paper are intimately related to the original Zoll metric in the sense that the geodesic foliation of the Zoll metric coincides with the indicatrix foliation of the Finsler metric, and the geodesic foliation of the Finsler metric coincides with the indicatrix (that is the unit sphere bundle) foliation of the Zoll metric. All geodesics of a $K=1$ Finsler metric constructed from a positively curved Zoll metric are closed and they intersect each other at a distance $\pi$ from the initial point.

The present paper clarifies the correspondence between a Finsler metric of positive constant flag curvature and a given positively curved Zoll metric. 
 To keep things simple, in the present paper, we restrict ourselves to the simplest case of determining Finsler metrics from Zoll surfaces of revolution, but the more general case of an arbitrary Zoll surface or the higher dimensional case can also be studied. We will consider some of these topics in a forthcoming research.

 \begin{center}
 	*
 \end{center}
  \quad Here is the structure of our paper. 
  
  In Section \ref{sec: Finsler surfaces} we recall basic facts about the geometry of Finsler surfaces and its indicatrix (the unit sphere bundle of a Finsler structure). Moreover, we present the peculiarities of Finsler metrics of constant flag curvature $K=1$ and recall 
   the existence theorem of constant sectional curvature Finsler structures due to Bryant (we refer the reader to the original papers \cite{Br1995}, \cite{Br2002}, or our presentation in \cite{SSS2012}). 
  
  In Section \ref{sec: Zoll surfaces} we recall basic properties of Zoll surfaces which provide the setting necessary to define the geometrical structures on the manifold of geodesics (our main reference here is \cite{Be}). 
  Moreover, in order to construct the manifold of geodesics, we explicitly compute the normal Jacobi fields along a geodesic of the Zoll metric, see  Proposition \ref{prop:Jacobi fields Y1 Y2}.
  
  Using all these, we move on to the construction of the manifold of geodesics $M$, in Section \ref{sec: manif of geod}, by giving the embedding $\iota:\Sigma\to TM$ of the indicatrix space in the tangent space. This is one of our main findings that will lead to the parametric equations of the Finsler indicatrix. Moreover, we introduce local coordinates on the manifold of geodesics in Subsection \ref{subsec:coordinates on M}.
  
  Now we are able to construct the Finsler metric in Section \ref{sec:the Finsler metric}, by giving the explicit form in coordinates of the parametric equations of the indicatrix curve, see Theorem \ref{thm: indicatrix param eqs}. We also prove that the positivity of the Gauss curvature of the original Zoll metric is in fact equivalent to the positive definiteness of the constant flag curvature Finsler metric in 
  Theorem \ref{thm: G vs indic curv}. Obviously, in the case $h=0$, the Zoll metric becomes the canonical constant Gauss curvature metric on the sphere and the induced Finsler metric is also the canonical constant Gauss curvature metric on the sphere (see Subsection \ref{Example: Riemannian case}). 
  
  In the final section we turn our attention to examples. The most ubiquitous case is when the function $h(x)$ is a polynomial in $x$. In this case the concrete form of the implicit equation of the Finsler indicatrix is given in  Theorem \ref{thm: implicit equation for h(x) polyn}. We apply this result to some concrete examples of positively curved Zoll surfaces constructed by ourselves, see Examples \ref{subsec: Eg. 1} and \ref{subsec: Eg. 2}. We also plot the indicatrix curves of the Finsler surfaces constructed (see Figures 	\ref{fig: indicatrices eg 1}). We point out that even though we did not write the explicit form of the corresponding Finsler fundamental function $F$, finding the implicit equations of the indicatrices is basically the same thing. Explicitly writing down these fundamental functionsis always possible, but since we have obtain the main geometrical properties of these Finsler metrics, this would not be of much use anyway. 
  
 \begin{acknowledgement}
 	We thank to H. Shimada and V. Matveev for many useful discussions.
 	\end{acknowledgement}

\section{Finsler surfaces}\label{sec: Finsler surfaces}

\subsection{The geometry of a Finsler surface}\label{subsec: Finsler surf}

A Finsler norm, or metric, on a real smooth, $n$-dimensional manifold
$M$ is a function $F:TM\to \left[0,\infty \right)$ that is positive and
smooth on $\widetilde{TM}=TM\backslash\{0\}$, has the {\it homogeneity property}
$F(x,\lambda v)=\lambda F(x,v)$, for all $\lambda > 0$ and all 
$v\in T_xM$, having also the {\it strong convexity} property that the
Hessian matrix
\begin{equation*}
g_{ij}=\frac{1}{2}\frac{\partial^2 F^2}{\partial y^i\partial y^j}
\end{equation*}
is positive definite at any point $u=(x^i,y^i)\in \widetilde{TM}$.

The fundamental function $F$ of a Finsler structure $(M,F)$ determines and it is determined by the (tangent) {\it indicatrix}, or the total space of the unit tangent bundle of $F$, namely
\begin{equation*}
\Sigma_F:=\{u\in TM:F(u)=1\}
\end{equation*}
which is a smooth hypersurface of $TM$. At each $x\in M$ we also have the {\it indicatrix at x}
\begin{equation*}
\Sigma_x:=\{v\in T_xM \ |\  F(x,v)=1\}=\Sigma_F\cap T_xM
\end{equation*}
which is a smooth, closed, strictly convex hypersurface in
$T_xM$. 

To give a Finsler structure $(M,F)$ is therefore equivalent to giving a smooth
hypersurface $\Sigma\subset TM$ for which the canonical projection
$\pi:\Sigma\to M$ is a surjective submersion and having the property
that for each $x\in M$, the $\pi$-fiber $\Sigma_x=\pi^{-1}(x)$ is
strictly convex including the origin $O_x\in T_xM$.

In order to study the differential geometry of the Finsler structure
$(M,F)$, it is convenient to consider the pull-back bundle
$\pi^*TM$ with the base manifold $\Sigma$ whose fibers over a point $u\in \Sigma$, $\pi(u)=x\in M$ are isomorphic 
to $T_xM$ 
(see
\cite{BCS2000}).

By defining an orthonormal moving coframing on $\pi^*TM$ with respect to the
Riemannian metric on $\Sigma$ induced by the Finslerian metric $F$, the
moving equations on this frame lead to the so-called Chern
connection. This is an almost metric compatible, torsion free connection of the
vector bundle $(\pi^*TM,\pi,\Sigma)$.

\quad We are going to restrict ourselves for the rest of the paper to the  two dimensional case. To be more precise, 
our manifold $\Sigma$ will be always 3-dimensional, and the manifold $M$ will be 2-dimensional, in the case it exists.

It is known (see for instance \cite{BCS2000}) that $\Sigma$ becomes a 3-dimensional Riemannian manifold with the metric
\begin{equation}
\omega^1\otimes \omega^1+\omega^2\otimes \omega^2+\omega^3\otimes \omega^3,
\end{equation}
that is $\{\omega^1,\omega^2,\omega^3\}$ is a $g$-orthonormal moving coframe on $\Sigma$. For later use we denote the natural dual basis by $\{\hat{e}_1,\hat{e}_2,\hat{e}_3\}$. 

It is also known that $\{\omega^1,\omega^2,\omega^3\}$ must satisfy the structure equations
\begin{equation}\label{Finsler structure eq}
\begin{split}
& d\omega^1= -I\omega^1\wedge\omega^3+\omega^2\wedge\omega^3\\
& d\omega^2= \omega^3\wedge\omega^1\\
& d\omega^3= K\omega^1\wedge\omega^2-J\omega^1\wedge\omega^3,
\end{split}
\end{equation}
where $I$, $J$, $K$ are smooth functions on $\Sigma$ called the invariants of the Finsler structure. More precisely, the functions $I$, $J$, $K$ are called the Cartan scalar, the Landsberg curvature and the flag curvature of $(M,F)$, respectively. Equivalently, we have
\begin{equation}
\begin{split}
& [\hat{e}_1,\hat{e}_2]=-K\hat{e}_3\\
& [\hat{e}_2,\hat{e}_3]=-\hat{e}_1\\
& [\hat{e}_3,\hat{e}_1]=-I\hat{e}_1-\hat{e}_2-J\hat{e}_3.
\end{split}
\end{equation}

A Finsler surface $(M,F)$ is Riemannian if and only if the Cartan scalar $I$ vanishes everywhere on $\Sigma$. 



A very usefull generalization of this notion is the {\it generalized Finsler structure} introduced 
by R. Bryant. In the two dimensional case a generalized Finsler structure is a 
coframing $\omega=(\omega^1,\omega^2,\omega^3)$ on a three dimensional manifold 
$\Sigma$ that satisfies some given structure equations (see \cite{Br1995}). 
By extension, one can study the generalized Finsler structure $(\Sigma,\omega)$ 
defined in this way ignoring even the existence of the underlying surface $M$. 

Observe  that in the case $n>2$, there will be no such 
globally defined coframing on the $2n-1$-dimensional manifold $\Sigma$. The reason 
is that even though the orthonormal frame bundle $\mathcal{F}$ over $M$ does admit 
a global coframing, it is a peculiarity of the $n=2$ dimensional case that  
$\mathcal{F}$ 
can be identified with $\Sigma$ (see also \cite{BCS2000}, p. 92-93 for concrete 
computations).

\begin{definition}
	 A 3-dimensional manifold $\Sigma$ endowed with a coframing $
	 \omega=(\omega^1,\omega^2,\omega^3)$ which satisfies the structure equations \eqref{Finsler structure eq}
	 will be therefore called a {\it generalized Finsler surface},
	 where $I$, $J$, $K$ are smooth functions on $\Sigma$, called the invariants of the generalized Finsler structure $
	 (\Sigma,\omega)$ (see \cite{Br1995} for details).
\end{definition}

As long as we work only with generalized Finsler surfaces, it might be possible that this generalized structure 
is not realizable as a classical Finslerian structure on a surface $M$. This imposes the following definition \cite{Br1995}.

\begin{definition}
	A generalized Finsler surface $(\Sigma,\omega)$ is said to be {\it amenable} if the leaf space $
	\mathcal{M}$ of the codimension 2 foliation defined by the equations $\omega^1=0$, $\omega^2=0$ is a smooth surface 
	such that the natural projection $\pi:\Sigma\to \mathcal{M}$   is a smooth submersion.
\end{definition}

 As R. Bryant emphasizes in  \cite{Br1995} the difference between a classical Finsler structure and a 
generalized one is global in nature, in the sense that {\it every generalized Finsler surface structure is locally 
	diffeomorphic to a classical Finsler surface structure. 
}\\
\quad The following fundamental result can be also found in \cite{Br1995}\\

\begin{theorem}
	 The necessary and sufficient condition for a generalized Finsler surface  $
	 (\Sigma,\omega)$ to be realizable as a classical Finsler structure on a surface are
	 \begin{enumerate} 
	 	\item the leaves of the foliation $\{\omega^1=0,\ \omega^2=0\}$ are compact;
	 	\item it is amenable, i.e. the space of leaves of the foliation 
	 	$\{\omega^1=0,\ \omega^2=0\}$ is a differentiable manifold $M$;
	 	\item the canonical immersion $\iota:\Sigma\to TM$, given by 
	 	$\iota(u)=\pi_{*,u}(\hat{e}_2)$, is one-to-one on each $\pi$-fiber $\Sigma_x$,
	 \end{enumerate}
	 where we denote by $(\hat{e}_1,\hat{e}_2, \hat{e}_3)$ the dual frame of the coframing $(\omega^1,\omega^2,\omega^3)$.
\end{theorem}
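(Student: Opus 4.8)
The plan is to reconstruct the classical Finsler data from the generalized coframing $(\omega^1,\omega^2,\omega^3)$ on $\Sigma$ and verify that conditions (1)–(3) are precisely what is needed to make this reconstruction work globally. First I would observe that the first two structure equations in \eqref{Finsler structure eq} show that the ideal generated by $\omega^1$ and $\omega^2$ is differentially closed, so $\{\omega^1=0,\ \omega^2=0\}$ defines an integrable codimension-$2$ (hence one-dimensional) foliation $\mathcal{F}$ on $\Sigma$; this is the candidate for the fiber foliation of the unit tangent bundle. Condition (2) (amenability) then gives a smooth surface $M$ and a submersion $\pi:\Sigma\to M$ whose fibers are the leaves of $\mathcal{F}$, while condition (1) forces those fibers to be circles (the only compact connected $1$-manifolds), which is what one wants for indicatrices. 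Along $\mathcal{F}$ the coframing restricts so that $\omega^3$ is a nonvanishing $1$-form on each leaf (from $d\omega^2=\omega^3\wedge\omega^1$ and $d\omega^1=(\omega^2-I\omega^1)\wedge\omega^3$ one reads off that $\hat e_3$ is tangent to the leaves), which will later be identified with (a multiple of) the angular form on the indicatrix circle.

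Next I would build the map $\iota:\Sigma\to TM$ by $\iota(u)=\pi_{*,u}(\hat e_2)$ and check it is well-defined and smooth: since $\hat e_1,\hat e_2$ span a complement to $T\mathcal F$ and $\pi_*$ kills exactly $T\mathcal F=\langle\hat e_3\rangle$, the vector $\pi_{*,u}(\hat e_2)\in T_{\pi(u)}M$ is genuinely defined. The key computation is to differentiate $\iota$ along a leaf: using the commutator relations, in particular $[\hat e_3,\hat e_1]=-I\hat e_1-\hat e_2-J\hat e_3$ and $[\hat e_2,\hat e_3]=-\hat e_1$, one shows that as $u$ moves along a leaf the point $\iota(u)$ traces a curve in the fixed vector space $T_xM$ whose velocity is $\pi_{*,u}(\hat e_1)$ (up to sign), which is nonzero and transverse to the radial direction. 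Hence the restriction $\iota|_{\Sigma_x}:\Sigma_x\to T_xM$ is an immersion of the circle $\Sigma_x$ into the plane $T_xM$; condition (3) upgrades this to an embedding, so its image is an embedded closed curve $\Sigma_x'\subset T_xM$. One must also check $0\notin\Sigma_x'$ and that $\Sigma_x'$ is star-shaped/strictly convex about $0$: convexity should follow by computing the geodesic curvature of $\iota|_{\Sigma_x}$ and seeing that the flag-curvature term $K$ together with the structure equations forces the correct turning, i.e. the curve is locally convex and, being embedded with total turning $2\pi$, globally convex enclosing the origin.

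Having the embedded strictly convex curves $\Sigma_x'$ depending smoothly on $x$, one defines $F:TM\to[0,\infty)$ to be the Minkowski functional of $\Sigma_x'$ fiberwise: $F(x,v)=\inf\{t>0: v/t\in \Sigma_x'\}$, extended by $F(x,0)=0$. Positive homogeneity is immediate; smoothness on $\widetilde{TM}$ and strong convexity of the Hessian $g_{ij}=\tfrac12\partial^2_{y^iy^j}F^2$ follow from the smoothness and strict convexity of the family $\Sigma_x'$. The final step is to verify that the abstract coframing $(\omega^1,\omega^2,\omega^3)$ on $\Sigma$ coincides with the one canonically attached to $(M,F)$ under the identification $u\mapsto\iota(u)$; this is a matter of matching the tautological/contact structure — $\omega^2$ is characterized as the pullback of the tautological form and $\omega^1,\omega^3$ as the horizontal/vertical pieces of the Chern connection — and the structure equations \eqref{Finsler structure eq} then guarantee the match.

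The main obstacle is the convexity verification in the second paragraph: amenability and compactness of leaves only give that $\iota|_{\Sigma_x}$ is an immersed closed curve, and it is not a priori obvious that it is embedded and bounds a convex region containing the origin. This is exactly why condition (3) (injectivity of $\iota$ on each fiber) is imposed as a hypothesis rather than derived; the remaining point, that an embedded closed curve arising this way is automatically strictly convex about $0$, is where the structure equations — in particular the roles of $\omega^2\wedge\omega^3$ in $d\omega^1$ and of $K\,\omega^1\wedge\omega^2$ in $d\omega^3$ — have to be used in an essential way. Everything else is bookkeeping with the commutator relations and the definition of the Minkowski functional.
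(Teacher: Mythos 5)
This theorem is not proved in the paper at all: it is quoted verbatim as Bryant's realizability result and the reader is referred to \cite{Br1995}, so there is no ``paper proof'' to match; your proposal has to stand on its own, and it does follow the standard Bryant-style reconstruction (integrability of $\{\omega^1=0,\omega^2=0\}$, amenability giving $\pi:\Sigma\to M$, compactness giving circle fibers, the canonical map $\iota(u)=\pi_{*,u}(\hat e_2)$, and the Minkowski functional of the image curves). That overall architecture is the right one.

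However, there are genuine gaps as written. First, the step you yourself flag as ``the main obstacle'' --- that each embedded image curve $\iota(\Sigma_x)$ is strictly convex with the origin in its interior, which is exactly what makes $F$ a Finsler norm --- is asserted (``should follow by computing the geodesic curvature'') rather than proved, and your suggested route via ``total turning $2\pi$'' is not the relevant invariant. The correct closure is the fiber-derivative computation you start but do not finish: since $\pi_{*}$ kills $\hat e_3$ and $\pi\circ\phi_t=\pi$ for the flow $\phi_t$ of $\hat e_3$, one gets from $[\hat e_2,\hat e_3]=-\hat e_1$ and $[\hat e_3,\hat e_1]=-I\hat e_1-\hat e_2-J\hat e_3$ that along a fiber
\begin{equation*}
\frac{d}{dt}\,\iota=\pi_{*}(\hat e_1),\qquad \frac{d^2}{dt^2}\,\iota=-\,\iota-I\,\frac{d}{dt}\,\iota ,
\end{equation*}
so the acceleration has radial component $-1$ in the basis $(\iota,\dot\iota)$: this is precisely local strong convexity toward the origin (and it is what later yields positive definiteness of $g_{ij}$, which does not follow from mere strict convexity of an abstract embedded curve). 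Moreover $\iota$ and $\dot\iota$ are everywhere linearly independent (they are $\pi_{*}\hat e_2$, $\pi_{*}\hat e_1$), so the angular coordinate of $\iota(t)$ is strictly monotone; the curve therefore winds around the origin some integer number of times, and it is condition (3) that forces this winding number to be one, whence the curve is embedded, star-shaped about $0$, and strictly convex enclosing the origin. Second, the theorem is an equivalence, and you never address necessity; it is easy (for a classical $F$ the $\pi$-fibers are the indicatrices, hence compact, the leaf space is $M$, and $\iota$ is the tautological inclusion, injective on fibers), but it must be said. Finally, the concluding identification of $(\omega^1,\omega^2,\omega^3)$ with the canonical coframing of the reconstructed $F$ is only sketched; it needs the observation that $\omega^2$ pulls back to the Hilbert form of $F$ under $\iota$, after which uniqueness of the Chern coframing does the rest.
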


In the same source it is pointed out that if for example the $\{\omega^1=0,\ \omega^2=0\}$ leaves are not 
compact, or even in the case they are, if they are ramified, or if the curves  $\Sigma_x$ winds around origin in $T_xM$, 
in any of these cases, the generalized Finsler surface structure is not realizable as a classical Finsler surface.

 An illustrative example found in \cite{Br1995} is the case of an amenable generalized Finsler surface such 
that the invariant $I$ is constant, however $I$ is not zero. This kind of generalized structure is not realizable as a Finsler 
surface because $I\neq 0$ means that the leaves of the foliation $\{\omega^1=0, \ \omega^2=0\}$ are not compact. 
Indeed, in the case $I^2<4$, the  $\pi$-fibers $\Sigma_x$ are logarithmic spirals in $T_xM$.\\
\quad Let us return to the general theory of generalized Finsler structures on surfaces. 
By taking the exterior derivative of the structure equations (\ref{Finsler structure eq}) one obtains the {\it Bianchi equations 
	of the Finsler structure}:
\begin{equation*}
 J=I_2,\quad 
 K_3+KI+J_2=0,
\end{equation*}
where we denote by $I_i$ the directional derivatives with respect to the coframing $\omega$, i.e.
$df=f_1\omega^1+f_2\omega^2+f_3\omega^3,
$
for any smooth function $f$ on $\Sigma$.

Taking now one more exterior derivative of the last formula written above,  one obtains the Ricci identities 
with respect to the generalized Finsler structure
\begin{equation*}
\begin{split}
& f_{21}-f_{12}=-Kf_3\\
& f_{32}-f_{23}=-f_1\\
& f_{31}-f_{13}=If_1+f_2+Jf_3.
\end{split}
\end{equation*}
\quad{\bf Remarks.}
\begin{enumerate}
	\item  Remark first that the structure equations of a Riemannian surface are obtained from (\ref{Finsler structure eq}) by 
	putting $I=J=0$. 
	\item Since $J=I_2$, one can easily see that the necessary and sufficient condition for a generalized Finsler structure to 
	be non-Riemannian is $I\neq 0$.  
\end{enumerate}


\subsection{Bryant's existence Theorems}\label{subsec: Bryant theorems}

The existence of Finsler structures of constant flag curvature on the manifold of geodesics of a positively curved Zoll manifold was pointed out for the first time by R. Bryant (see \cite{Br2002}). We will recall in this section Bryant's results and reformulate them in a convenient form for our considerations in the following paragraphs. The theorems in this section are essentially equivalent to the results in  
\cite{Br2002}.

Observe that in the case $K=1$ the 
the structure equations \eqref{Finsler structure eq} can be written as 
\begin{equation}\label{K=1 Finsler structure eq 2}
\begin{split}
& d\omega^1= [-I\omega^1+\omega^2-J\omega^3]\wedge\omega^3\\
& d\omega^2= \omega^3\wedge\omega^1\\
& d\omega^3= \omega^1\wedge [-I\omega^1+\omega^2-J\omega^3],
\end{split}
\end{equation}
with the Bianchi equations
\begin{equation}\label{K=1 Finsler Bianchi}
J=I_2,\quad I+J_2=0,
\end{equation}
and that, by using the well-known formula 
$\mathcal{L}_X\omega=i_Xd\omega+d(i_X\omega)$, we have the following invariance formulas 
\begin{equation}\label{K=1 invariance}
\begin{split}
& \mathcal{L}_{\hat{e}_2}\omega^1=\omega^3,\quad 
\mathcal{L}_{\hat{e}_2}\omega^2=0,\quad
\mathcal{L}_{\hat{e}_2}\omega^3=-\omega^1,
\\
& \mathcal{L}_{\hat{e}_2}I=J,\quad \mathcal{L}_{\hat{e}_2}J=-I.
\end{split}
\end{equation}

We observe that in the case of a non-Riemannian Finsler surface with $K=1$, the remaining invariants $I$ and $J$ must be both non-vanishing smooth functions on $\Sigma$.

An elementary computation shows that 
\begin{equation}\label{invariance 2}
\begin{split}
& \mathcal{L}_{\hat{e}_2}[\omega^1\otimes \omega^1+\omega^3\otimes \omega^3]=0,\quad \mathcal{L}_{\hat{e}_2}[\omega^1\wedge \omega^3]=0,\\
& \mathcal{L}_{\hat{e}_2}[I\omega^1+J\omega^3]=0.\end{split}
\end{equation}




\begin{figure}[h]
	\begin{center}
		\setlength{\unitlength}{1cm}
		\begin{picture}(5,4)
		\put(0.3,3.9){\vector(1,0){2.5}}
		\put(0.2,3.5){\vector(1,-1){2.5}}
		\put(0,3.5){\vector(-1,-1){2.5}}
		\put(1.3,1.5) {$\pi$}
		\put(-1.5,1.5) {$\lambda$}
		\put(-0.2,3.8) {$\Sigma$}
		\put(3,3.8) {$TM$}
		\put(1.5,4){$\iota$}
		\put(-2.9,0.5) {$(\Lambda,g)  $}
		\put(2.5,0.5) {$(M,F) $}
		\end{picture}
		\caption{The Cartan double fibration.}
		\label{fig:Cartan fibration}
	\end{center}
\end{figure}
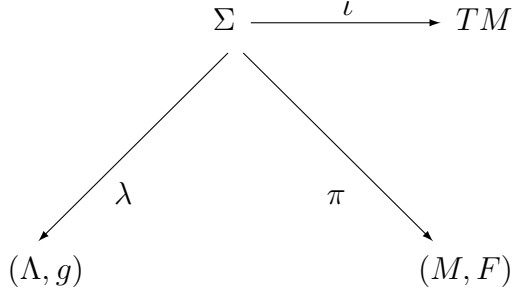

\bigskip

\bigskip




We recall that a Finsler structure $(M,F)$ is called {\it geodesically amenable} if the set 
$\Lambda:=\Sigma\slash_{  \langle \hat{e}_2\rangle}=\Sigma\slash_{\{\omega^1=0,\omega^3=0\}}$ of the integral curves of the vector field $\hat{e}_2$ can be given a structure of smooth manifold of dimension two such that the natural mapping $\lambda:\Sigma\to \Lambda$ is a smooth submersion (see Figure \ref{fig:Cartan fibration}). 

In general there is no natural metric on $\Lambda$, but in the case of $K=1$ such a metric does exist.

\begin{theorem}
	Let $(M,F)$ be a geodesically amenable Finsler surface with the canonical coframe $(\omega^1,\omega^2,\omega^3)$ and invariats $I,J$ and $K=1$ on the indicatrix bundle $\Sigma$. 
	
	Then, on the manifold of geodesics $\Lambda$ there exists a one-form $\varpi$ 
		\begin{equation}\label{the 1-form Lambda}
	\lambda^*(\varpi)=I\omega^1+J\omega^3,
	\end{equation}
	and a Riemannian metric $g$ with area form $dA$ and Gauss curvature $G$ determined only by the Finsler structure $F$. More precisely
	\begin{equation}\label{metric on Lambda}
	\begin{split}
	&\lambda^*(g)=\omega^1\otimes \omega^1+\omega^3\otimes \omega^3\\
	&\lambda^*(dA)=\omega^1\wedge \omega^3
	\end{split}
	\end{equation}
	and 
	$$
	\lambda^*(G)=1-I^2-J^2-I_3+J_1.
	$$
\end{theorem}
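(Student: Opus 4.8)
The plan is to work entirely on $\Sigma$ via pull-back and show that the prescribed formulas are consistent and that the resulting objects descend to the leaf space $\Lambda = \Sigma/\langle \hat e_2\rangle$. The key point throughout is that a form or function on $\Sigma$ descends to $\Lambda$ precisely when it is basic with respect to the foliation $\{\omega^1=0,\omega^3=0\}$, i.e. annihilated by $\hat e_2$ (its interior product and its Lie derivative along $\hat e_2$ both vanish); and for this the invariance formulas \eqref{K=1 invariance} and \eqref{invariance 2} already collected above do all the real work.

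First I would handle the one-form $\varpi$. One checks that $I\omega^1 + J\omega^3$ has zero interior product with $\hat e_2$ (immediate, since $\omega^1(\hat e_2)=\omega^3(\hat e_2)=0$) and, by \eqref{invariance 2}, is $\hat e_2$-invariant; hence it is basic and descends to a well-defined one-form $\varpi$ on $\Lambda$ with $\lambda^*\varpi = I\omega^1 + J\omega^3$. The same argument applied to the symmetric $2$-tensor $\omega^1\otimes\omega^1 + \omega^3\otimes\omega^3$ and to the $2$-form $\omega^1\wedge\omega^3$, again using \eqref{invariance 2}, shows these descend to a tensor $g$ and a $2$-form $dA$ on $\Lambda$. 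Positive-definiteness of $g$ on each tangent space of $\Lambda$ follows because $\lambda_*$ sends the complement of $\hat e_2$ isomorphically onto $T\Lambda$ and $\omega^1,\omega^3$ restrict there to a coframe; nondegeneracy of $dA$ is then automatic and it is the area form of $g$.

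Next I would identify the Levi-Civita connection and curvature of $g$ downstairs by pulling everything up. On $\Sigma$ set $\eta^1 := \omega^1$, $\eta^3 := \omega^3$ (the pull-backs of a local coframe on $\Lambda$); their exterior derivatives are read off from \eqref{K=1 Finsler structure eq 2}. The Riemannian structure equations $d\eta^1 = -\psi\wedge\eta^3$, $d\eta^3 = \psi\wedge\eta^1$, $d\psi = -\,(\lambda^*G)\,\eta^1\wedge\eta^3$ determine the connection form $\psi$ and the Gauss curvature. Comparing $d\omega^1$ and $d\omega^3$ from \eqref{K=1 Finsler structure eq 2} with this ansatz, and using that the "$\omega^2$-part" is the piece transverse to the leaves while the $\hat e_2$-direction is spanned out, one finds that $\psi$ differs from $\omega^2$ by a multiple of the basic form $I\omega^1 + J\omega^3$; a short computation gives the connection form explicitly. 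Then $d\psi$ is computed using \eqref{K=1 Finsler structure eq 2}, the Bianchi relations \eqref{K=1 Finsler Bianchi} ($J = I_2$, $I + J_2 = 0$), and the definition $df = f_1\omega^1 + f_2\omega^2 + f_3\omega^3$; collecting the coefficient of $\omega^1\wedge\omega^3$ yields $\lambda^*G = 1 - I^2 - J^2 - I_3 + J_1$, and one checks the remaining terms (those involving $\omega^2$) cancel, which is exactly the statement that $\lambda^*G$ is basic — consistent with $G$ being a function on $\Lambda$.

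The main obstacle I anticipate is bookkeeping in this last step: one must correctly separate, in $d(\text{connection form})$, the part proportional to $\omega^1\wedge\omega^3$ from the parts proportional to $\omega^1\wedge\omega^2$ and $\omega^2\wedge\omega^3$, and verify the latter vanish. This vanishing is forced — it must happen, since the left side descends — but showing it cleanly requires careful use of the Bianchi identities $J=I_2$, $I=-J_2$ together with the Ricci identities (e.g. $f_{21}-f_{12} = -Kf_3 = -f_3$ when $K=1$) to rewrite second directional derivatives of $I$ and $J$. Once those cancellations are in hand, the coefficient of $\omega^1\wedge\omega^3$ reads off directly as $1 - I^2 - J^2 - I_3 + J_1$, completing the proof.
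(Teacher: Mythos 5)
Your proposal is correct and follows essentially the same route as the paper: the connection form $\psi=I\omega^1+J\omega^3-\omega^2$ you extract by matching the Riemannian structure equations is exactly the paper's third coframe element $\alpha^3=\lambda^*(\varpi)-\omega^2$, and the curvature then falls out of $d\psi$ with the Bianchi relations $J=I_2$, $I+J_2=0$, just as in the paper's (terser) verification. One small simplification: the $\omega^1\wedge\omega^2$ and $\omega^2\wedge\omega^3$ terms in $d\psi$ cancel directly from these two Bianchi relations, since only first directional derivatives of $I$ and $J$ appear, so the Ricci identities you anticipated needing are not actually required.
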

\begin{proof}
	The proof is quite straightforward. From \eqref{K=1 invariance} it is clear that the one form $I\omega^1+J\omega^3$ on $\Sigma$ is invariant under the geodesic flow of $F$ and hence it descends on the manifold of geodesics $\Lambda$. 
	
	Moreover, by using the  equations 
	\eqref{K=1 Finsler structure eq 2} and \eqref{K=1 Finsler Bianchi}
	it can be easily shown that the coframe
	\begin{equation}
	\alpha^1:=\omega^1,\ \alpha^2=\omega^3,\ \alpha^3=	\lambda^*(\varpi)-\omega^2
	\end{equation}
	gives the Riemannian metric $g$ with sectional curvature $G$ as in the theorem. 
	$\qedd$
\end{proof}

Conversely, one has

\begin{theorem}
	Let $(\Lambda,g)$ be a geodesically amenable Riemannian surface with the moving coframe $(\alpha^1, \alpha^2,\alpha^3)$ on the unit sphere bundle  $U^g\Lambda=\Sigma$ and Gauss curvature $G$, and let us assume that there exists a one form $\varpi$ on $\Lambda$ satisfying the structure equation
	\begin{equation}\label{varpi struct eq}
	d\lambda^*(\varpi)=(G-1)\alpha^1\wedge\alpha^2.
	\end{equation} 
	Then, on the manifold of geodesics $M$ there exists a $K=1$ Finsler structure $F$ with the canonical coframe 
	\begin{equation}\label{induced Finsler coframe}
	\omega^1=\alpha^1,\ \omega^2=\lambda^*(\varpi)-\alpha^3,\ \omega^3=\alpha^2,
	\end{equation}
	and the invariants $I=\varpi_1$, $J=\varpi_2$, where $\lambda^*(\varpi)=\varpi_1\alpha^1+\varpi_2\alpha^2$.
\end{theorem}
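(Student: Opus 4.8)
The plan is to run the previous theorem in reverse: given the Riemannian data $(\Lambda, g)$ together with the one-form $\varpi$ satisfying \eqref{varpi struct eq}, we simply \emph{define} the coframing $(\omega^1,\omega^2,\omega^3)$ on $\Sigma = U^g\Lambda$ by the formulas \eqref{induced Finsler coframe}, and verify that it satisfies the $K=1$ generalized Finsler structure equations \eqref{K=1 Finsler structure eq 2}. First I would recall the structure equations of the Riemannian surface $(\Lambda,g)$ on its unit sphere bundle: in the conventions fixed above these read $d\alpha^1 = \alpha^2\wedge\alpha^3$, $d\alpha^2 = \alpha^3\wedge\alpha^1$, $d\alpha^3 = G\,\alpha^1\wedge\alpha^2$ (the $I=J=0$ case of \eqref{Finsler structure eq}, as noted in Remark 1). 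Then I would substitute $\alpha^1 = \omega^1$, $\alpha^2 = \omega^3$, $\alpha^3 = \lambda^*(\varpi) - \omega^2$ and compute $d\omega^1, d\omega^2, d\omega^3$.

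The key computational steps are as follows. For $d\omega^1 = d\alpha^1 = \alpha^2\wedge\alpha^3 = \omega^3\wedge(\lambda^*(\varpi)-\omega^2)$; writing $\lambda^*(\varpi) = I\omega^1 + J\omega^3$ (using the definitions $I=\varpi_1$, $J=\varpi_2$ and that $\alpha^3$ has zero $\varpi$-component since $\varpi$ is a form on the base $\Lambda$), this becomes $\omega^3\wedge(I\omega^1 + J\omega^3 - \omega^2) = [-I\omega^1 + \omega^2 - J\omega^3]\wedge\omega^3$, which is exactly the first equation of \eqref{K=1 Finsler structure eq 2}. For $d\omega^3 = d\alpha^2 = \alpha^3\wedge\alpha^1 = (\lambda^*(\varpi)-\omega^2)\wedge\omega^1 = \omega^1\wedge[-I\omega^1+\omega^2-J\omega^3]$, the third equation. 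The middle one requires the hypothesis \eqref{varpi struct eq}: $d\omega^2 = d\lambda^*(\varpi) - d\alpha^3 = (G-1)\alpha^1\wedge\alpha^2 - G\,\alpha^1\wedge\alpha^2 = -\alpha^1\wedge\alpha^2 = -\omega^1\wedge\omega^3 = \omega^3\wedge\omega^1$. So all three structure equations hold with flag curvature exactly $K=1$, and reading off the coefficients identifies the invariants as $I = \varpi_1$, $J = \varpi_2$ on $\Sigma$, consistently with the Bianchi relations $J = I_2$, $I + J_2 = 0$ which follow from $d(d\varpi)=0$ on $\Lambda$.

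The one genuine subtlety — and the step I expect to be the main obstacle — is passing from the \emph{generalized} Finsler structure $(\Sigma,\omega)$ so constructed to an honest Finsler metric $F$ on the manifold of geodesics $M$. Here $M$ is the leaf space of the foliation $\{\omega^1 = 0, \omega^3 = 0\}$, i.e. the integral curves of $\hat e_2$, which are precisely the geodesics of $g$ (by \eqref{metric on Lambda} with the roles of the flows swapped); that these close up and that the leaf space is a smooth surface is the hypothesis that $(\Lambda,g)$ is geodesically amenable, but one must also check the third condition of Bryant's realizability theorem, namely that the canonical immersion $\iota:\Sigma\to TM$, $u\mapsto \pi_{*,u}(\hat e_2)$, is injective on each fiber $\Sigma_x$ — equivalently that the fiber curves do not wind around the origin, which is where the positivity assumption on $G$ (equivalently the positive-definiteness, Theorem \ref{thm: G vs indic curv}) enters. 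I would handle this by invoking Bryant's theorem directly and deferring the fiber-convexity verification to the explicit construction in later sections; at the level of this statement it suffices to produce the coframing and its structure equations, so the proof proper is the short computation above.
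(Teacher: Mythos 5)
Your core argument is exactly the paper's proof: the paper disposes of this theorem in one line, calling it a straightforward verification using the Riemannian structure equations together with \eqref{K=1 Finsler structure eq 2} and \eqref{K=1 Finsler Bianchi}, and your substitution of \eqref{induced Finsler coframe} into $d\alpha^1$, $d\alpha^2$, $d\alpha^3$, with the hypothesis \eqref{varpi struct eq} supplying $d\omega^2=\omega^3\wedge\omega^1$, is precisely that verification, with the invariants read off correctly. One correction to your final paragraph: the base $M$ of the new Finsler structure is the leaf space of its indicatrix foliation $\{\omega^1=0,\ \omega^2=0\}=\{\alpha^1=0,\ \alpha^3-\lambda^*(\varpi)=0\}$ (Bryant's $\varpi$-foliation), not of $\{\omega^1=0,\ \omega^3=0\}$; the integral curves of $\hat e_2$ are the leaves of $\{\alpha^1=0,\ \alpha^2=0\}$, i.e.\ the fibers of $U^g\Lambda\to\Lambda$, not the geodesics of $g$ --- the paper records exactly this matching of foliations immediately after the theorem. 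The realizability question you raise (compact leaves, non-winding fibers) is genuine but is not treated in the paper's proof of this statement either, so deferring it does not put you at a disadvantage relative to the paper.
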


The proof is again straightforward using equations  	\eqref{K=1 Finsler structure eq 2} and \eqref{K=1 Finsler Bianchi} and the structure equations of a Riemannian structure. 

Observe that in the constructions above we always identify the indicatrix bundle $\Sigma$ of the Finsler structure with the unit sphere bundle of the Riemannian metric $g$.


A quick analysis of the leaf quotient spaces shows the following.

\begin{enumerate}
	\item The $(M,F)$-geodesic foliation $\{\omega^1=0,\omega^3=0\}$
	coincides with the 
	$(\Lambda,g)$-indicatrix foliation $\{\alpha^1=0,\alpha^2=0\}$.
	\item The $(M,F)$-indicatrix foliation $\{\omega^1=0,\omega^2=0\}$ coincides with the foliation $\{\alpha^1=0,\alpha^3-\lambda^*(\varpi)=0\}$, called by Bryant, the $\varpi$-foliation on $(\Lambda,g)$.
\end{enumerate}

If we denote  by subscripts the directional derivatives, that is $df=f_{\theta 1}\theta^1+f_{\theta 2}\theta^2+f_{\theta 3}\theta^3$, for any smooth function $f:\Sigma\to \R$, then one can obtain the following result.
\begin{theorem}\label{thm: Zoll to Finsler}
	Let $(\Lambda=\Sph^2,g)$ be a Zoll surface with everywhere positive Gauss curvature $G>0$, and let $M$ be the manifold of oriented geodesics of  $g$. Then, there exists a unique Finsler structure with $K=1$ on $M$ with the moving coframe 
	\begin{equation}\label{omega from alpha2}
	\begin{split}
	& \omega^1:=\lambda^*(\sqrt{G})\ {\theta}^1   \\
	& \omega^2:=  -{\theta}^3 \\
	& \omega^3:=\lambda^*(\sqrt{G})\ {\theta}^2,
	\end{split}
	\end{equation}
where $(\theta^1,\theta^2,\theta^3)$ is the coframe of $g$. 

	The invariants of this Finsler structure are given by 
	\begin{equation}\label{I, J from G & theta}
	I:=\frac{1}{2}\lambda^*(\frac{{G}_{\theta 2}}{G^\frac{3}{2}}),\qquad J:=-\frac{1}{2}\lambda^*(\frac{{G}_{\theta 1}}{G^\frac{3}{2}}).
	\end{equation}
\end{theorem}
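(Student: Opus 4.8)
The plan is to produce the desired Finsler structure directly on $\Sigma=U^g\Lambda$ by verifying the structure equations \eqref{K=1 Finsler structure eq 2} for the coframing \eqref{omega from alpha2}, and then to descend to $M$ via Bryant's realizability criterion. One cannot simply quote the converse theorem above applied to $(\Lambda,g)$ itself, since there the resulting coframe is $\omega^1=\alpha^1$, not $\lambda^*(\sqrt G)\,\alpha^1$, and the $\varpi$-foliation is never the geodesic foliation of $g$. So: on $\Sigma$ take the canonical coframe $(\theta^1,\theta^2,\theta^3)$ of $g$, obeying the Riemannian structure equations $d\theta^1=\theta^2\wedge\theta^3$, $d\theta^2=\theta^3\wedge\theta^1$, $d\theta^3=\lambda^*(G)\,\theta^1\wedge\theta^2$ (the $I=J=0$ case of \eqref{Finsler structure eq}), and note that the differential of any $\lambda$-pullback of a function from $\Lambda$ has no $\theta^3$-component. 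Put $f:=\lambda^*(\sqrt G)$; since $G>0$ this is smooth and everywhere positive, with $f^2=\lambda^*(G)$ and $df=f_{\theta1}\theta^1+f_{\theta2}\theta^2$. Because $f>0$, the triple $\omega^1:=f\theta^1$, $\omega^2:=-\theta^3$, $\omega^3:=f\theta^2$ is again a coframing on $\Sigma$.

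Next I would compute $d\omega^1,d\omega^2,d\omega^3$ by the Leibniz rule, then substitute $\theta^1=\omega^1/f$, $\theta^2=\omega^3/f$, $\theta^3=-\omega^2$ and collect terms. This routine calculation yields exactly the structure equations \eqref{K=1 Finsler structure eq 2} with $K\equiv1$ and
\[
I=\frac{f_{\theta2}}{f^2},\qquad J=-\frac{f_{\theta1}}{f^2},
\]
which, because $f^2=\lambda^*(G)$ and $f_{\theta i}=\tfrac12(\lambda^*G)^{-1/2}(\lambda^*G)_{\theta i}$, are precisely the formulas \eqref{I, J from G & theta}. The coefficients of $\omega^1\wedge\omega^2$ arising from $d\omega^2$ and from $d\omega^3$ coincide exactly because $f^2=\lambda^*(G)$, which is what forces $K=1$; the Bianchi identities \eqref{K=1 Finsler Bianchi} then follow automatically from $d^2=0$. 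Thus $(\Sigma,(\omega^i))$ is a generalized Finsler surface of constant flag curvature $1$.

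It remains to realize $(\Sigma,(\omega^i))$ as a classical Finsler structure on $M$, for which I would invoke Bryant's realizability theorem. The foliation $\{\omega^1=0,\omega^2=0\}$ equals $\{\theta^1=0,\theta^3=0\}$, the geodesic foliation of the Zoll metric $g$; its leaves are the closed geodesics of $g$, hence compact, and its leaf space is the manifold of oriented geodesics $M\cong\Sph^2$, a smooth surface with smooth projection $\pi:\Sigma\to M$ — this is exactly the classical amenability of a Zoll surface. For the remaining condition, observe that $\hat{e}_2$, the frame vector dual to $\omega^2=-\theta^3$, equals minus the infinitesimal generator of the $\Sph^1$-action on the fibres of $\lambda:\Sigma\to\Lambda$; hence for a fixed geodesic $\gamma$ (i.e. a point $x\in M$) the fibre $\Sigma_x=\pi^{-1}(x)$ is the circle $\{\dot\gamma(t)\}$, and $\iota|_{\Sigma_x}$ sends $\dot\gamma(t)$ to minus the normal Jacobi field along $\gamma$ arising from the infinitesimal rotation of the unit tangent vector at $\gamma(t)$; one must check that this closed curve in $T_xM$ has no self-intersection. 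Uniqueness is then immediate, since a Finsler structure is completely determined by its canonical coframe on the indicatrix bundle and \eqref{omega from alpha2} prescribes it.

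I expect this last condition — injectivity of $\iota$ on each indicatrix fibre — to be the main obstacle. It is equivalent to the statement that the curve of \emph{rotation Jacobi fields} along a $g$-geodesic is an embedded (and, when $G>0$, strictly convex) simple closed curve in $T_xM$, and this is where positivity of the Gauss curvature is genuinely used. Carrying it out requires the explicit normal Jacobi fields along $\gamma$ from Proposition \ref{prop:Jacobi fields Y1 Y2} together with the explicit embedding $\iota:\Sigma\to TM$ built in Section \ref{sec: manif of geod}; compare Theorem \ref{thm: G vs indic curv}. Everything else in the argument is either an exterior-derivative bookkeeping computation or a classical fact about Zoll surfaces.
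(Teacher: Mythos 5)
Your argument is correct and is essentially the paper's own proof: the paper frames the construction as a conformal change $u^2g$ and lets the structure equations force $u^2=\lambda^*(G)$, whereas you posit $f=\lambda^*(\sqrt{G})$ from the start and verify the same structure equations for the coframe \eqref{omega from alpha2}, arriving at the identical invariants $I$, $J$ and $K=1$. Your additional discussion of Bryant's realizability conditions (compact leaves, amenability, injectivity of $\iota$ on the fibres) goes beyond what the paper's proof contains --- the paper stops at the coframe computation and only addresses convexity of the indicatrix later, in Theorem \ref{thm: G vs indic curv} --- so leaving that step flagged rather than carried out does not put you behind the paper's own argument.
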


\begin{proof}
	The idea is to consider a conformal change of $g$ with a 
      function $u$ and to determine this $u$ by using \eqref{varpi struct eq}.
	
	Indeed, the if we denote by $(\alpha^1,\alpha^2,\alpha^3)$ the coframe of the conformal metric $u^2g$, then 
	$$
	\alpha^1=u\theta^1,\ \alpha^2=u\theta^2,\ \alpha^3=\theta^3 -*d(\log u),
	$$
	where $*d(\log u)=-\frac{u_{\theta 2}}{u}\theta^1+\frac{u_{\theta 1}}{u}\theta^2$.
	
	It results that the coframe \eqref{induced Finsler coframe} reads
	\begin{equation}\label{induced Finsler coframe 1}
		\omega^1=u\theta^1,\ \omega^2=\lambda^*(\varpi)+*d(\log u)-\theta^3,\ \omega^3=u\theta^2,
	\end{equation}
	where $\lambda^*(\varpi)=u(I\theta^1+J\theta^2)$.
	
	An obvious choice is 
	\begin{equation}
	\lambda^*(\varpi)=-*d(\log u)
	\end{equation}
	in which case the coframe \eqref{induced Finsler coframe 1} simplifies to
	\begin{equation}\label{induced Finsler coframe 2}
	\omega^1=u\theta^1,\ \omega^2=-\theta^3,\ \omega^3=u\theta^2.
	\end{equation}
	
	Now we can use \eqref{varpi struct eq} to determine $u$, or equivalently, just to compute the structure equations of the coframe \eqref{induced Finsler coframe 2}. However, note that the function $G$ in \eqref{varpi struct eq} is the curvature of the metric $u^2g$ . Indeed, observe that $(\omega^1,\omega^2,\omega^3)$ in \eqref{induced Finsler coframe 2} satisfy the structure equations of a Finsler structure \eqref{K=1 Finsler structure eq 2} if and only if
	$$
	I=\frac{u_{\theta 2}}{u^2},\ u^2=\lambda^*(G)	,\ and \ J=-\frac{u_{\theta 1}}{u^2},
	$$
	respectively, that is the proof is finished.
	$\qedd$
\end{proof}

\begin{remark}
	\begin{enumerate}
		\item 	The construction above gives a non-Riemannian Finsler structure of $K=1$ on $M$ if and only if  both directional derivatives of $G$ with respect to $\theta^1$ and $\theta^2$ are non-vanishing functions on $\Lambda$. 
		\item The Finsler structure constructed in Theorem \ref{thm: Zoll to Finsler} satisfies the extra condition 
		\begin{equation}\label{extra condition}
			I_1+J_3=0,
		\end{equation}
		where subscripts are directional derivatives with respect to $\omega^1$, $\omega^3$. Indeed, if for an arbitrary smooth function $f$ on $\Sigma$, we denote $df=f_1\omega^1+f_2\omega^2+f_3\omega^3$, then 
		$$
		f_{\theta^1}=\lambda^*(\sqrt{G})f_1,\ 
f_{\theta^2}=\lambda^*(\sqrt{G})f_3,\ f_{\theta^3}=-f_2.		$$
	\end{enumerate}

And hence $I_{\theta 1}+J_{\theta 2}=\sqrt{G}(I_1+J_3)$. On the other hand, observe that \eqref{I, J from G & theta} implies $I_{\theta 1}+J_{\theta 2}=0$ and hence \eqref{extra condition} follows. 

\end{remark}

Conversely, we can start from a Finsler surface $(M,F)$ of Zoll type on $M=\Sph^2$, that is a Finsler surface all of whose geodesics are closed and have the same length $2\pi$, with coframe $(\omega^1,\omega^2,\omega^3)$ on the indicatrix bundle $\Sigma$ and invariants $I,J,K=1$, and construct a Zoll metric on the manifold of geodesics $\Lambda=\Sph^2$. 

Indeed, let us consider on $\Sigma$ the directional PDE system with respect to $(\omega^1,\omega^2,\omega^3)$
\begin{equation}\label{directional PDE G}
\begin{split}
& \hat{G}_1+2J\hat{G}=0\\
& \hat{G}_3-2I\hat{G}=0
\end{split}
\end{equation}
for an unknown function $\hat{G}:\Sigma\to \R$. This PDE system has solutions if and only if 
$	I_1+J_3=0$, and in this case, the solution depends on a constant, only. 

More precisely, we need to consider the global existence and the positiveness of the solution. This can be seen as follows. Observe that the condition 
$	I_1+J_3=0$ implies $d(-J\omega^1+I\omega^3)=0$, and since $\Sigma$ is diffeomorphic to $\mathbb{RP}^3$, the first de Rham cohomology class is zero, hence there exists a function $\hat{\rho}:\Sigma\to\R$ such that
$$
d\hat{\rho}=-J\omega^1+I\omega^3.
$$

Since $\hat{e}_2(\hat{\rho})=0$ it follows that there exists a function $\rho:\Lambda\to\R$ such that $\lambda^*(\rho)=\hat{\rho}$, and hence the solution we seek will be given by $G=e^{2\rho}$. 

We obtain 
\begin{theorem}\label{thm: Finsler to Zoll}
	Let $(M=\Sph^2,F)$ be a Finsler surface of Zoll type with coframe $(\omega^1,\omega^2,\omega^3)$ on the indicatrix bundle $\Sigma$ and invariants $I,J,K=1$, that satisfies the condition $	I_1+J_3=0$. 
	
	Then the manifold of geodesics $\Lambda=\Sph^2$ can be endowed with a Riemannian metric $g$ with the coframe 
	\begin{equation}
	\begin{split}
	\theta^1&=\hat{G}^{-\frac{1}{2}}\omega^1\\
	\theta^2&=\hat{G}^{-\frac{1}{2}}\omega^3\\
	\theta^3&=-\omega^2,
	\end{split}
	\end{equation}
 and the Gauss curvature $\hat{G}$, 	where $\hat{G}$ is the solution of the directional PDE \eqref{directional PDE G}. In fact $(\Lambda,g)$ is a Zoll manifold. 
\end{theorem}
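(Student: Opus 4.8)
My plan is to break the statement into three parts: first produce the positive function $\hat{G}$ on $\Lambda$; then verify that $(\theta^1,\theta^2,\theta^3)$ is the moving coframe on the unit sphere bundle $\Sigma=U^g\Lambda$ of a Riemannian metric $g$ with Gauss curvature $\hat{G}$; and finally check that this metric is Zoll. For the first part I would quote the discussion preceding the statement. Since $F$ is of Zoll type, the flow of $\hat{e}_2$ is a free $S^1$-action of period $2\pi$ on $\Sigma\cong\mathbb{RP}^3$, so its orbit space $\Lambda$ is a smooth surface (in particular $F$ is geodesically amenable), and a short argument with the Gysin sequence forces $\Lambda\cong\Sph^2$. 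The hypothesis $I_1+J_3=0$ together with \eqref{K=1 Finsler Bianchi} gives $d(-J\omega^1+I\omega^3)=0$; since $H^1_{dR}(\Sigma)=0$ one gets $\hat{\rho}:\Sigma\to\R$ with $d\hat{\rho}=-J\omega^1+I\omega^3$, and $\hat{e}_2(\hat{\rho})=0$ forces $\hat{\rho}$ to descend to $\rho:\Lambda\to\R$. Then $\hat{G}:=\lambda^*(e^{2\rho})$ is smooth, strictly positive, constant along $F$-geodesics (so $\hat{G}_2=0$), and differentiating $\hat{G}=e^{2\hat{\rho}}$ one reads off at once that it solves the directional system \eqref{directional PDE G}, uniquely up to a positive constant.

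For the second part I would compute $d\theta^1$, $d\theta^2$, $d\theta^3$ directly from $\theta^1=\hat{G}^{-1/2}\omega^1$, $\theta^2=\hat{G}^{-1/2}\omega^3$, $\theta^3=-\omega^2$, using \eqref{K=1 Finsler structure eq 2} and the identity $d\hat{G}=-2J\hat{G}\,\omega^1+2I\hat{G}\,\omega^3$ coming from \eqref{directional PDE G} (and $\hat{G}_2=0$). The terms in $I$ and $J$ cancel and one is left with $d\theta^1=\theta^2\wedge\theta^3$, $d\theta^2=\theta^3\wedge\theta^1$, $d\theta^3=\hat{G}\,\theta^1\wedge\theta^2$, namely \eqref{K=1 Finsler structure eq 2} with $I=J=0$ and flag curvature replaced by $\hat{G}$; these are precisely the structure equations of a Riemannian surface on $\Sigma=U^g\Lambda$ with Gauss curvature $\hat{G}$. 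This computation is the exact inverse of the one in the proof of Theorem \ref{thm: Zoll to Finsler}, so it may equally be read off from there; it establishes the first assertion of the theorem.

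It remains to show that $(\Lambda,g)$ is Zoll, which is where the real content lies. The $F$-indicatrix over $x\in M$ is the leaf of $\{\omega^1=0,\ \omega^2=0\}=\{\theta^1=0,\ \theta^3=0\}$ through any $u\in\Sigma_x$; in the Riemannian frame this is an integral curve of the field dual to $\theta^2$, so its image $\delta_x:=\lambda(\Sigma_x)\subset\Lambda$ is a curve of unit $g$-speed along which the marked unit vector is $g$-parallel, i.e. a $g$-geodesic (the one carrying that vector as its unit normal). Since $\Sigma_x$ is compact --- being a genuine Finsler indicatrix, hence a topological circle --- $\delta_x$ is closed; and every oriented $g$-geodesic is of this form, since a choice of unit normal $u$ along it yields a point $x=\pi(u)\in M$ with $\lambda(\Sigma_x)$ equal to that geodesic. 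Hence all geodesics of $g$ are closed.

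For the equal-length property --- the step I expect to be the main obstacle, as it is exactly what ``Zoll'' adds to ``all geodesics closed'' --- I would apply Stokes' theorem to the tube $D:=\pi^{-1}(\sigma)$ over a smooth path $\sigma$ in the connected surface $M$ joining two points $x,x'$: then $\partial D=\Sigma_{x'}-\Sigma_x$ with consistent orientations, $D$ is a union of leaves of $\{\omega^1=0,\ \omega^2=0\}$, so $\omega^1\wedge\omega^2$ --- and hence $\theta^3\wedge\theta^1=\hat{G}^{-1/2}\,\omega^1\wedge\omega^2$ --- restricts to zero on $D$; using $d\theta^2=\theta^3\wedge\theta^1$ from the second part one obtains $\oint_{\Sigma_{x'}}\theta^2-\oint_{\Sigma_x}\theta^2=\int_D d\theta^2=0$. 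Since $\oint_{\Sigma_x}\theta^2$ equals the $g$-length of $\delta_x$ times the covering multiplicity of $\lambda|_{\Sigma_x}$, and that multiplicity is a locally constant positive integer on the connected $M$, all geodesics of $g$ have the same length; the only subsidiary point needing care is that this multiplicity is $1$, which holds once the geodesics of $F$ (equivalently of $g$) are simple, as is part of the standing Zoll hypothesis. Altogether $(\Lambda,g)$ is a Zoll surface.
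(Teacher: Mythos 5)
Your proof is correct, and its first two parts are the paper's own: the construction of $\hat G$ from $I_1+J_3=0$ via $d(-J\omega^1+I\omega^3)=0$, the vanishing of $H^1_{dR}(\Sigma)$ and descent along $\hat e_2$ is precisely the discussion preceding the theorem, and your verification that $(\theta^1,\theta^2,\theta^3)$ satisfies the Riemannian structure equations with curvature $\hat G$ is the computation the paper leaves as ``straightforward'' (it checks out: the $I$- and $J$-terms cancel exactly as you say). Where you genuinely diverge is the equal-length step. The paper disposes of it by citing Wadsley's theorem (Besse, Theorem 7.12), which only gives a common period of the geodesic flow and leaves the normalization to a constant factor in $\hat G$; your tube argument, integrating $d\theta^2=\theta^3\wedge\theta^1=\hat G^{-1/2}\,\omega^1\wedge\omega^2$ over $\pi^{-1}(\sigma)$ and noting it vanishes there because the fibre direction is annihilated by $\omega^1,\omega^2$, is self-contained and in fact a little sharper: since the $g$-geodesic flow orbit through any $u\in\Sigma_x$ stays in the circle $\Sigma_x$ and hence fills it exactly once per prime period, $\oint_{\Sigma_x}\theta^2$ \emph{is} the prime period (the length) of that closed geodesic, so your closing worry about the covering multiplicity of $\lambda|_{\Sigma_x}$ — and the appeal to simplicity, which the paper's definition of ``Zoll type'' for $F$ does not actually include — is superfluous; equality of the integrals already gives equality of the lengths. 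Closedness you treat exactly as the paper does (compactness of the leaves of $\{\omega^1=0,\ \omega^2=0\}=\{\theta^1=0,\ \theta^3=0\}$), and neither you nor the paper proves simplicity of the $g$-geodesics, so your argument is at least at parity there, while replacing the external citation by an elementary Stokes computation.
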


Indeed, it is clear from our construction that the geodesics of the Riemannian manifold $(\Lambda,g)$ are closed, but the fact that they also have the same length is not proved yet. However, this follows immediately from Wadsley Theorem (see \cite{Be}, Theorem 7.12, p. 183). This common length can be arranged to be $2\pi$ by choosing an appropriate constant factor in $\hat G$. 

\begin{remark}
	The Theorems \ref{thm: Zoll to Finsler} and  \ref{thm: Finsler to Zoll} are reciprocal each other, leading to the following important question:
	
	{\it There is an one-to-one correspondence between $G>0$ Zoll metrics and $K=1$ Finsler manifolds?}

The answer is positive, there is an one-to-one correspondence between $\mathcal K$-Cartan structures (in this case $\mathcal K=G$) and $K=1$ Finsler structures up to diffeomorphism and conformal equivalence (see \cite{SSP2014}, section 7 for details). 


\end{remark}




\begin{remark}

Taking into account the invariance formulas \eqref {K=1 invariance}, if we denote by $\xi_t: \Sigma\to \Sigma$, $t\in \R$, the geodesic flow of $F$, that is the flow of $\hat{e}_2$, then it is trivial to see that
\begin{equation}\label{geod flow frame}
\begin{split}
& \xi_{t,*}(\hat{e}_1)=\cos t\ \hat{e}_1-\sin t\  \hat{e}_3\\
& \xi_{t,*}(\hat{e}_2)= \hat{e}_2\\
& \xi_{t,*}(\hat{e}_3)=\sin t\ \hat{e}_1+\cos t \  \hat{e}_3.
\end{split}
\end{equation}

 It is obvious from the construction presented above that
 the $K=1$ Finsler structure $F$ induced by a positively curved Zoll metric has all geodesics closed and of same length $2\pi$.
	 
	 	Moreover, the unit speed geodesics of this Finsler structure, emanating from a fixed point $p\in M$, intersect 
		at the distance $\pi$ in the same point $q$.

Indeed, let us consider a parametrization $s\mapsto v_s$ of the indicatrix $\Sigma_p\subset T_pM$, $s\in \Sph^1$, $F(v_s)=1$. It is clear from the general construction that $\hat{e}_3$ is tangent to the indicatrix, that is $\dfrac{d}{ds}v_s=-\hat{e}_3|_{v_s}$. 
	
	By means of the geodesic flow $\xi_t$ introduced above, $\xi_\pi(v_s)\in TM$, and moreover we have
	$$
	\frac{d}{ds}\xi_\pi(v_s)=\xi_{\pi,*}(\dfrac{d}{ds}v_s)=-\hat{e}_3|_{\xi_\pi(v_s)},
	$$
	where we have used \eqref{geod flow frame}.
	
	Therefore, there exists a point $q\in M$ such that 
	$\xi_\pi(v_s)\in T_qM$, for any parameter value $s$, and the statement follows. 
	
\end{remark}


\section{The geometry of a Zoll surface of revolution}\label{sec: Zoll surfaces}
In this section we review the basic facts on the geometry of a Zoll surface of revolution needed in the next sections. Our main reference is \cite{Be}.
\subsection{Geodesics on a Zoll surface}\label{subsec: Zoll geod}
Let us consider the Zoll surface of revolution $(\Lambda,g)=(\Sph^2,g)$ with the coordinates and metric $g$ described in Introduction (see \eqref{Zoll metric }).



We describe now the geodesics of $(\Lambda,g)$ in terms of the Hamiltonian formalism. We consider the local coordinates $(r,\theta;\xi_1,\xi_2)$ on the cotangent space $T^*\Lambda$, and the Hamiltonian function
$$
2E=\frac{\xi_1^2}{[1+h(\cos r)]^2 }+\frac{\xi_2^2}{\sin^2r}.
$$

Since we have a surface of revolution, $F=\xi_2$ is a first integral, i.e. it can be checked by direct computation that the Poisson bracket vanishes $\{E,F\}=0$.

We consider now the geodesics such that $2E=1$ and $\xi_2=c\in[-1,1]$, that is
\begin{equation}\label{eq (3.1)}
\begin{cases}
\frac{\xi_1^2}{[1+h(\cos r)]^2 }+\frac{\xi_2^2}{\sin^2r}=1\\
\xi_2=c.
\end{cases}
\end{equation}
It follows
\begin{equation}\label{eq (3.2)}
\xi_1=\pm [1+h(\cos r)] \sqrt{1-\frac{c^2}{\sin^2r}}
\end{equation}

We obtain the geodesic (flow) $t\mapsto (r(t),\theta(t);\xi_1(t),\xi_2(t))$ given by
\begin{equation}\label{Zoll geod}
\begin{cases}
\frac{dr}{dt}=\frac{\partial E}{\partial \xi_1}=\frac{\xi_1}{[1+h(\cos r)]^2 }=\pm\frac{1}{1+h(\cos r)}\sqrt{1-\frac{c^2}{\sin^2r}}\\
\frac{d\theta}{dt}=\frac{\partial E}{\partial \xi_2}=\frac{\xi_2}{\sin^2r}=\frac{c}{\sin^2r}.
\end{cases}
\end{equation}

Observe that 
$$
\frac{d\theta}{dt}\cdot \sin^2r=c
$$
is called the {Clairaut constant}. 



By \eqref{eq (3.1)} or \eqref{eq (3.2)} we have $|c|\le \sin r$.
Put
\begin{equation*}
r_c=\arcsin c\in [0,\pi/2].
\end{equation*}
Then the range of $r(t)$ along the geodesic is $[r_c,\pi-r_c]$. Since $h$ is an odd function, we have
\begin{gather*}\label{ds over dr}
\int_{r_c}^{\pi-r_c}\frac{dt}{dr}=\pm
\int_{r_c}^{\pi-r_c}\frac{\sin r[1+h(\cos r)]}{\sqrt{\sin^2 r-\sin^2 r_c}}\,dr=\pm
 \pi\\
\int_{r_c}^{\pi-r_c}\frac{d\theta}{dr}=\pm
\int_{r_c}^{\pi-r_c}\frac{\sin r_c[1+h(\cos r)]}{\sin r\sqrt{\sin^2 r-\sin^2 r_c}}\,dr=\pm\pi,
\end{gather*}
which indicate that all geodesics are closed
and have length $2\pi$.

Let us now describe the frame $(\hat{v}_1,\hat{v}_2,\hat{v}_3)$
on the unit tangent bundle $\Sigma=U\Lambda$
by means of the coordinates $(r,\theta)$ on
$\Lambda$. To do so, we use the decomposition
\begin{equation*}
T_v\Sigma=H_v+V_v\qquad (v\in\Sigma)
\end{equation*}
of the tangent space into its horizontal and vertical parts and the natural identifications
\begin{equation*}
H_v\simeq T_{\lambda(v)}\Lambda,\quad V_v\simeq v^\perp\subset T_{\lambda(v)}\Lambda.
\end{equation*}
If $X\in T_v\Sigma$ is decomposed to the sum of $X_1\in T_{\lambda(v)\Lambda}$ (horizontal
part) and $X_2\in v^\perp$ (vertical part),
then we will write it as
\begin{equation*}
X=\begin{pmatrix}X_1\\X_2\end{pmatrix}.
\end{equation*}

We define the orientation on $\Lambda$ so that $\partial/\partial r, \partial/\partial \theta$ is positive in this order. Let $n(t)$
be the unit normal vector to $\dot\gamma(t)$
such that $\dot\gamma(t), n(t)$ is positive
in this order. Then they are described as
\begin{equation}\label{flame1}
\begin{gathered}
\dot\gamma(t)=\epsilon\frac1{1+h(\cos r)}\sqrt{1-\frac{c^2}{\sin^2 r}}\frac{\partial}{\partial r}+ \frac{c}{\sin^2 r}\frac{\partial}{\partial\theta},\\
n(t)=\frac{-c}{\sin r[1+h(\cos r)]}\frac{\partial}{\partial r}+\epsilon
\frac{\sqrt{\sin^2 r-c^2}}{\sin^2 r}\frac{\partial}{\partial \theta},
\end{gathered}
\end{equation}
where $\epsilon=\pm 1$, and the vector fields
$\hat{v}_1,\hat{v}_2, \hat{v}_3$ at $\dot\gamma(t)\in\Sigma$
are described as follows:
\begin{equation}\label{frame2}
\hat{v}_1=\begin{pmatrix}n(t)\\0\end{pmatrix},\qquad
\hat{v}_2=\begin{pmatrix}\dot\gamma(t)\\0\end{pmatrix},\qquad
\hat{v}_3=\begin{pmatrix}0\\n(t)\end{pmatrix}.
\end{equation}
For the sake of convinience, we will write $\hat{\gamma}(t)$ instead of
$\dot{\gamma}(t)$ when it represents the point of $\Sigma$. Also, we will denote by $[\gamma]$ the corresponding point on the manifold of geodesic, $M$.

\subsection{Jacobi fields}\label{subsec: Zoll surface Jacobi fields}

It is known that the tangent space at a point $\gamma$ to the manifold of geodesics of a Zoll surface is given by the space of Jacobi fields along the geodesic $\gamma$.

We start by decomposing the Killing vector field $\frac{\partial}{\partial \theta}$ along $\gamma(t)$ as
\begin{equation}\label{decom d theta}
\frac{\partial}{\partial \theta}|_{\gamma(t)}=c\cdot \dot{\gamma}(t)\pm \sqrt{\sin^2 r-c^2}\cdot n(t).
\end{equation}

Let us define the Jacobi field
\begin{equation}
Y(t):=y(t)n(t),\quad y(t)=\pm\sqrt{\sin^2r(t)-c^2}
\end{equation}
along $\gamma(t)$, and compute
\begin{equation}\label{comput y'}
y'(t)=\frac{\cos r}{1+h(\cos r)}.
\end{equation}

Observe that
\begin{equation}
Y(0)=0,\  i.e.\  y(0)=0.
\end{equation}

If we take one more derivative, we can compute the Gauss curvature $G(r)$ of this Zoll metric by means of the Jacobi equation, that is
\begin{equation}
y''(t)=\mp \Bigl[
1-\frac{\cos r h'(\cos r)}{1+h(\cos r)}
\Bigr]
\frac{\sqrt{\sin^2r-c^2}}{[1+h(\cos r)]^2}
=-G(\gamma(t))y(t),
\end{equation}
where 
\begin{equation}\label{Gauss curv for Zoll surf}
\hat{G}(r)=G(r)=\frac{1}{[1+h(\cos r)]^3}\Bigl[ 1+h(\cos r) -\cos r\cdot h'(\cos r) 
\Bigr],
\end{equation}
that is usual formula for the Gauss curvature of a Zoll metric (see for instance \cite{Be}, p. 105).

\bigskip

\begin{figure}[h]
	\begin{center}
		
		\setlength{\unitlength}{1cm}
		\begin{picture}(0,5)(0,-2.5)
		\qbezier(-2.5,0)(-2.465,1.25)(-1.85,1.85)
		\qbezier(0,2.5)(-1.25,2.465)(-1.85,1.85)
		\qbezier(2.5,0)(2.465,1.25)(1.85,1.85)
		\qbezier(0,2.5)(1.25,2.465)(1.85,1.85)
		\qbezier(-2.5,0)(-2.465,-1.25)(-1.85,-1.85)
		\qbezier(0,-2.5)(-1.25,-2.465)(-1.85,-1.85)
		\qbezier(2.5,0)(2.465,-1.25)(1.85,-1.85)
		\qbezier(0,-2.5)(1.25,-2.465)(1.85,-1.85)

		\qbezier(-1.9,1.624807681)(0,1.1)(1.9,1.624807681)
		\qbezier(-2.1,-1.356465997)(0,-1.8)(2.1,-1.356465997)
		
		\qbezier[65](0,2.5)(1,0)(0,-2.5)
		{\color{red}
			\qbezier(-1,-1)(-0.8,1.22)(0.33,1.38)
			\qbezier(0.33,1.38)(1.4,1.22)(1.5,0)
			\put(1.5,0){\vector(0,-1){0}}
			\put(-0.86,0){\vector(0,1){0}}
			\begingroup
			\scriptsize
			\put(-1.8,-1.2){$\gamma(t)=(r(t),\theta(t))$}
			\put(-1.9,0){$\gamma(t)\searrow$}
			\put(1.5,0.3){$\gamma(t)\nearrow$}
			\put(0,1){$\gamma(0)$}
			\endgroup
		}

		\put(0.8,1.24){\vector(1,1){0.5}}
		\put(1.1,1){\vector(1.3,1){0.5}}
		\put(-0.38,1){\vector(-1,1){0.5}}
		\put(0.33,1.38){\vector(0.2,1){0.3}}
		
		\begingroup
		\scriptsize
		\put(-1,1.5){$n$}
		\put(1.2,1.8){$n$}
		\put(1.6,1.3){$n$}
		\put(0.7,2.8){$n$}
		\put(-2,2.8){$(r,\theta)=(0,\theta_0)$}
		\put(-0.5,-2.8){$(\pi,\theta_0)$}
		\put(2,1.7){$r=r_c=\arcsin c$}
		\put(2.2,-1.6){$r=\pi-r_c$}
		\put(2.4,-2){$=\pi-\arcsin c$}
		\endgroup
		
		\end{picture}
		\caption{A geodesic $\gamma$ on the Zoll sphere.}	\label{fig1}
	\end{center}
\end{figure}
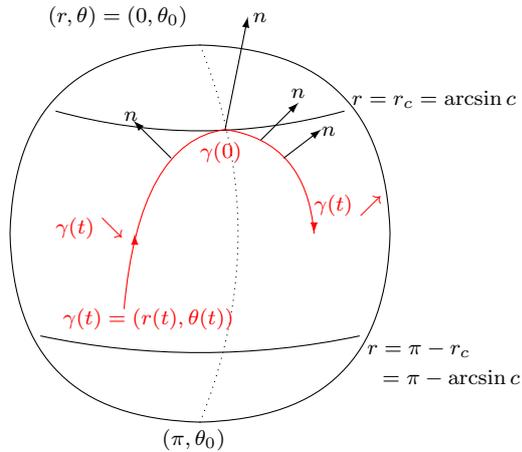

Let us consider the parameter $t$ on the geodesic $\gamma$ such that at $t=0$, $\gamma(0)=r_c:=\arcsin c$, see Figure \ref{fig1}.

We will construct in the following two {\it normalized} Jacobi fields $Y_1$ and $Y_2$ along $\gamma(t)$. Here normalized means that we will determine these Jacobi fields subject to the initial conditions
\begin{equation}\label{Jacobi IC}
\begin{cases}
Y_1(0)=0,\quad Y'_1(0)=n(0)\\
Y_2(0)=n(0),\quad Y'_2(0)=0.
\end{cases}
\end{equation}

\begin{proposition}\label{prop:Jacobi fields Y1 Y2}
The normalized Jacobi fields
along $\gamma(t)$ are given by
$Y_i(t):=y_i(t)n(t)$, $i=1,2$, where
\begin{equation}\label{y_1,y_2 formulas}
\begin{split}
& y_1(t)=\frac{1+h(\cos r_c)}{\cos r_c}\cdot y(t)=
\pm \frac{1+h(\cos r_c)}{\cos r_c} \sqrt{\sin^2r-c^2}
\\
& y_2(t)=\frac{1}{y'_1(t)}-y_1(t)\int_0^t
\frac{K(\gamma(s))}{[y'_1(s)]^2}ds
\end{split}
\end{equation}
\end{proposition}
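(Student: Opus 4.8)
The plan is to turn the (vector) Jacobi equation into a scalar second order ODE, solve it for $Y_1$ by rescaling the Jacobi field $y(t)=\pm\sqrt{\sin^2 r(t)-c^2}$ already produced above, and obtain $Y_2$ from $Y_1$ by reduction of order. Since $\gamma$ is a unit speed geodesic on the oriented surface $(\Lambda,g)$, the unit normal $n(t)$ is parallel along $\gamma$ (the frame $(\dot\gamma,n)$ is parallel); hence for a field of the form $Y=w(t)\,n(t)$ one has $\frac{D}{dt}Y=w'\,n$ and $\frac{D^2}{dt^2}Y=w''\,n$, so that $Y$ is a Jacobi field if and only if $w''+G(\gamma(t))\,w=0$, where $G$ is the Gauss curvature \eqref{Gauss curv for Zoll surf} (this is the function written $K(\gamma(s))$ in the statement). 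By the same parallelism, the conditions \eqref{Jacobi IC} become $w(0)=0,\ w'(0)=1$ for $Y_1=y_1 n$ and $w(0)=1,\ w'(0)=0$ for $Y_2=y_2 n$. Finally recall from \eqref{comput y'} that $y(0)=0$ and $y'(t)=\frac{\cos r(t)}{1+h(\cos r(t))}$, so in particular $y'(0)=\frac{\cos r_c}{1+h(\cos r_c)}\neq 0$.

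For $Y_1$: since $y$ already vanishes at $t=0$ and $y'(0)\neq 0$, the solution with the required data is simply $y_1:=\frac{1}{y'(0)}\,y=\frac{1+h(\cos r_c)}{\cos r_c}\,y$. Indeed $y_1(0)=0$, $y_1'(0)=1$, and being a constant multiple of $y$ it still solves $w''+Gw=0$. This is exactly the first line of \eqref{y_1,y_2 formulas}, and $y_1'(t)=\frac{1+h(\cos r_c)}{\cos r_c}\cdot\frac{\cos r(t)}{1+h(\cos r(t))}$.

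For $Y_2$: I would directly verify that the claimed expression $z(t):=\frac{1}{y_1'(t)}-y_1(t)\int_0^t\frac{K(\gamma(s))}{[y_1'(s)]^2}\,ds$ solves the same IVP. Using $y_1''=-Ky_1$ one gets $\frac{d}{dt}\big(1/y_1'\big)=Ky_1/(y_1')^2$; substituting, the two terms in $Ky_1/(y_1')^2$ cancel and $z'(t)=-y_1'(t)\int_0^t K(\gamma(s))/[y_1'(s)]^2\,ds$, whence $z'(0)=0$; differentiating once more and using $y_1''=-Ky_1$ again gives $z''=-Kz$, while $z(0)=1/y_1'(0)=1$. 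By uniqueness of solutions of the linear equation $w''+Kw=0$ we conclude $z=y_2$, which is the second line of \eqref{y_1,y_2 formulas}. As a built-in check, $z$ is designed so that $y_1 z'-y_1' z\equiv -1$, i.e.\ the constant Wronskian of $Y_1$ and $Y_2$, which is just the integrated form of the formula.

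The only delicate point — and the step I would flag as needing care rather than a genuine obstacle — is that $y_1'(t)=\frac{1+h(\cos r_c)}{\cos r_c}\cdot\frac{\cos r(t)}{1+h(\cos r(t))}$ vanishes precisely where $r(t)=\pi/2$, a value attained on every geodesic with $|c|<1$; there both $1/y_1'$ and the integrand $K/(y_1')^2$ are singular. Since their combination equals the smooth Jacobi field $y_2$, these singularities cancel, so the formula for $y_2$ is to be read on any interval free of zeros of $y_1'$, with $y_2$ extended across such points by the ODE (equivalently, via the identity $y_1 y_2'-y_1' y_2\equiv -1$). Apart from this bookkeeping, the argument is the routine reduction-of-order computation sketched above.
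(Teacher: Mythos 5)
Your proposal is correct and follows essentially the same route as the paper, which simply asserts that the claimed $Y_i(t)=y_i(t)n(t)$ can be checked "by direct computation" to solve the Jacobi equation with the initial conditions \eqref{Jacobi IC}, using the scalar reduction $w''+G w=0$ and the computation of $y'(t)$; you merely carry out this verification (rescaling $y$ for $y_1$, reduction of order plus the Wronskian identity for $y_2$) in full detail. Your remark about the apparent singularity of the $y_2$ formula where $y_1'$ vanishes (i.e.\ at $r=\pi/2$) is a sound extra observation, consistent with the paper's later remark that this singularity is removable in the expression for $v_2$.
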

\begin{proof}
We can check by direct computation that these $Y_i(t)$ satisfy the initial conditions \eqref{Jacobi IC} by using the computation for $y(t)$.

\end{proof}


\section{The manifold of geodesics}\label{sec: manif of geod}
\subsection{The embedding of $\Sigma$ into the tangent space $TM$}\label{subsec: the embedding}

\quad To define the desired Finsler metric on $M$,
we will embed $\Sigma$ into $TM$ so that $\Sigma$ is realized as the indicatrix bundle.
Since the vector field $\hat{e}_2$ will be
the generator of the geodesic flow, this embedding should be given by (compare with Bryant [3])
\begin{equation*}
\iota:\Sigma\to TM,\qquad u\to \iota(u)=\pi_{*,u}(\hat{e}_2).
\end{equation*}

Let us describe
\begin{equation*}
\iota(\hat{\gamma}(t))=a(t)Y_1+b(t)Y_2\in T_{[\gamma]}M,
\end{equation*}
where $Y_i$ are the tangent vectors to  $M$
at $[\gamma]$ which correspond to the Jacobi
fields $Y_i(t)$ along $\gamma(t)$ defined before. One obtains:

\medskip
\begin{proposition}\quad  $a(t)=-y_2(t),\quad b(t)=y_1(t)$.
\end{proposition}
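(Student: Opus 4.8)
The plan is to unwind the definition $\iota(u)=\pi_{*,u}(\hat e_2)$, identify $\hat e_2$ inside the frame of Subsection \ref{subsec: Zoll geod}, and then recognise the resulting normal Jacobi field as $-y_2(t)Y_1+y_1(t)Y_2$ by a short Wronskian computation. First I would pin down $\hat e_2$: since $(\theta^1,\theta^2,\theta^3)$ is the $g$-coframe dual to the frame $(\hat v_1,\hat v_2,\hat v_3)$ of \eqref{frame2}, and the induced Finsler coframe of Theorem \ref{thm: Zoll to Finsler} satisfies $\omega^1=\lambda^*(\sqrt G)\theta^1$, $\omega^2=-\theta^3$, $\omega^3=\lambda^*(\sqrt G)\theta^2$, passing to dual frames gives $\hat e_2=-\hat v_3$. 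Hence, at the point $\hat\gamma(t)\in\Sigma$, the vector $\hat e_2$ is purely vertical, with zero horizontal part and vertical part $-n(t)$ in the notation of \eqref{frame2}. (As a check, the flow of $-\hat v_3$ is the $2\pi$-periodic fibrewise rotation of the unit vector, which is consistent with the earlier observation that the induced Finsler geodesics are closed of length $2\pi$.)

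Next I would use the standard description of $\pi_*$ together with the identification $T_{[\gamma]}M\cong\{\text{normal Jacobi fields along }\gamma\}$ (see \cite{Be}): for $X\in T_{\hat\gamma(t)}\Sigma$ with horizontal part $X_1\in T_{\gamma(t)}\Lambda$ and vertical part $X_2\in\dot\gamma(t)^\perp$, the tangent vector $\pi_*X$ corresponds to the Jacobi field $J$ along $\gamma$ with $J(t)=X_1^\perp$ and $J'(t)=X_2$, where $'$ denotes covariant differentiation along $\gamma$. Applying this to $X=\hat e_2|_{\hat\gamma(t)}$, for which $X_1=0$ and $X_2=-n(t)$, we conclude that $\iota(\hat\gamma(t))$ is the Jacobi field $J$ along $\gamma$ determined by $J(t)=0$ and $J'(t)=-n(t)$; such a field is automatically orthogonal to $\dot\gamma$, so it is indeed a normal Jacobi field.

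Finally I would exhibit $Z:=-y_2(t)Y_1+y_1(t)Y_2$ as the same Jacobi field. Since $n$ is parallel along the geodesic $\gamma$ we have $Y_i=y_i\,n$ and $Y_i'=y_i'\,n$, and the Wronskian $W:=y_1y_2'-y_1'y_2$ is constant along $\gamma$; evaluating it at the parameter value $0$ with the initial data \eqref{Jacobi IC}, namely $y_1(0)=0$, $y_1'(0)=1$, $y_2(0)=1$, $y_2'(0)=0$, yields $W\equiv-1$. Writing $Z(s)=\bigl[y_1(t)y_2(s)-y_2(t)y_1(s)\bigr]n(s)$ we get $Z(t)=0$ and $Z'(t)=\bigl[y_1(t)y_2'(t)-y_2(t)y_1'(t)\bigr]n(t)=W\,n(t)=-n(t)$. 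Thus $Z$ and $\iota(\hat\gamma(t))$ are normal Jacobi fields along $\gamma$ with the same value and the same covariant derivative at the parameter value $t$, hence they coincide; comparing coefficients gives $a(t)=-y_2(t)$ and $b(t)=y_1(t)$, as claimed.

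I expect the only delicate point to be the first step: making sure the correspondence between the Finsler coframe $(\omega^1,\omega^2,\omega^3)$ and the Riemannian coframe $(\theta^1,\theta^2,\theta^3)$ — signs included — is exactly right, so that $\hat e_2$ is genuinely the vertical field $-\hat v_3$ and not, say, $\hat v_3$ or a vector carrying a nonzero horizontal component. Once that identification and the (standard) formula for $\pi_*$ in terms of Jacobi fields are in place, the remaining two steps are routine.
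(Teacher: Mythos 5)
Your proposal is correct and takes essentially the same route as the paper: both identify $\hat{e}_2=-\hat{v}_3$, use the value/covariant-derivative description of $\pi_*$ in terms of normal Jacobi fields to reduce the claim to $a\,y_1(t)+b\,y_2(t)=0$, $a\,y_1'(t)+b\,y_2'(t)=-1$, and then solve this $2\times 2$ system. Your Wronskian verification of the candidate $(-y_2(t),y_1(t))$ is just an equivalent way of carrying out that last linear-algebra step (the Wronskian $y_1y_2'-y_1'y_2\equiv-1$ being the determinant of the system), so there is no substantive difference from the paper's argument.
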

\begin{proof}\quad
Since $\hat{e}_2=-\hat{v}_3$, we have
\begin{equation*}
a(t)\begin{pmatrix}Y_1(t)\\Y'_1(t)\end{pmatrix} + b(t)\begin{pmatrix}Y_2(t)\\Y'_2(t)\end{pmatrix}=-\begin{pmatrix}0\\n(t)\end{pmatrix},
\end{equation*}
that is, in scalar form,
\begin{equation*}
a(t)\begin{pmatrix}y_1(t)\\y'_1(t)\end{pmatrix} + b(t)\begin{pmatrix}y_2(t)\\y'_2(t)\end{pmatrix}=-\begin{pmatrix}0\\1\end{pmatrix}.
\end{equation*}
Thus the proposition follws.

$\qedd$
\end{proof}

\subsection{Coordinates on the manifold of geodesics}\label{subsec:coordinates on M}

In this section we introduce a system of coordinates 
$$(R,\Theta)\in (-\frac{\pi}{2},\frac{\pi}{2})\times \R\slash 2\pi \mathbb Z 
$$ 
on the manifold of geodesics $M$, which is a 2-dimensional manifold, as follows. 

The base manifold of the Zoll surface $(\Lambda,g)$ is $\Sph^2$ with the usual spherical coordinates 
$(r,\theta)\in [0,\pi]\times \R\slash 2\pi \mathbb Z$. However, in the present context we will consider negative values of the coordinate $r$ as well, that is, for a fixed $\theta=\theta_0$, the half meridian  
$\{(r,\theta):-\frac{\pi}{2}<r<\frac{\pi}{2},\theta=\theta_0\}$ is included in the northern hemisphere, see Figure \ref{fig2}.

If we consider a unit length tangent vector in the direction $\frac{\partial}{\partial \theta}$ at $(r_0,\theta_0)$, for some $0<r_0<\frac{\pi}{2}$,  it is easy to see that this tangent vector can be smoothly extended to a vector field on $\Lambda$ for any $-\frac{\pi}{2}<r\leq 0$.

In other words, for the same non-oriented geodesic $\gamma_0$ with initial conditions $\gamma_0(0)=(r_0,\theta_0)$, $\dot{\gamma}(0)=\frac{1}{\sin r_0}$, we need to make difference if the initial velocity is in the direction of $\frac{\partial}{\partial \theta}$ or opposite direction. Observe that in terms of Clairaut constants, these geodesics correspond to $c_0>0$ and $c_0<0$, and therefore we will denote these oriented geodesics by $\gamma_0^+$ and $\gamma_0^-$, respectively. On the manifold of {\it oriented} geodesics $M$ they will give different points $[\gamma_0^+]$ and $[\gamma_0^-]$.

In order to introduce local coordinates on the manifold of geodesics, it is usefull to make the following convention. In order to make distinction between $\gamma_0^+$ and $\gamma_0^-$, we will identify $\gamma_0^-$ with the geodesic on $(\Lambda,g)$ with initial point $(-r_0, \theta+\pi)$ and Clairaut constant $-c_0$.


\bigskip

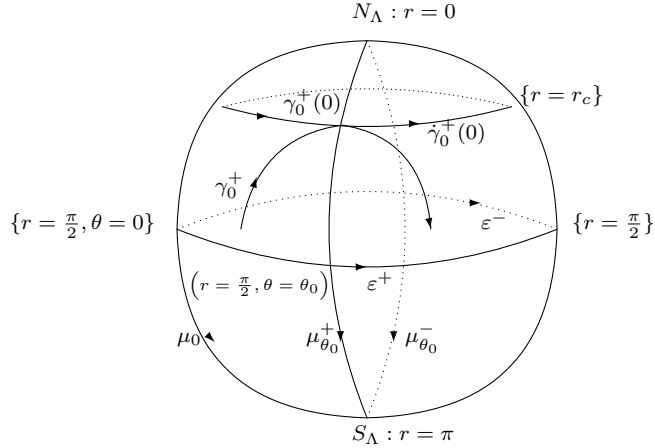
\begin{figure}[h]
	\begin{center}
		
		\setlength{\unitlength}{1cm}
		\begin{picture}(0,5)(0,-2.5)
		\qbezier(-2.5,0)(-2.465,1.25)(-1.85,1.85)
		\qbezier(0,2.5)(-1.25,2.465)(-1.85,1.85)
		\qbezier(2.5,0)(2.465,1.25)(1.85,1.85)
		\qbezier(0,2.5)(1.25,2.465)(1.85,1.85)
		\qbezier(-2.5,0)(-2.465,-1.25)(-1.85,-1.85)
		\qbezier(0,-2.5)(-1.25,-2.465)(-1.85,-1.85)
		\qbezier(2.5,0)(2.465,-1.25)(1.85,-1.85)
		\qbezier(0,-2.5)(1.25,-2.465)(1.85,-1.85)
		\qbezier(-1.9,1.624807681)(0,1.1)(1.9,1.624807681)
		\qbezier[65](-1.9,1.624807681)(0,2.1)(1.9,1.624807681)
		
		\qbezier(2.5,0)(0,-1)(-2.5,0)
		\qbezier[65](2.5,0)(0,1)(-2.5,0)

		\qbezier(0,2.5)(-1,0)(0,-2.5)
		\qbezier[65](0,2.5)(1,0)(0,-2.5)
		\qbezier(-1.66,0)(-1.46,1.22)(-0.33,1.38)
		\qbezier(-0.33,1.38)(0.74,1.22)(0.84,0)
		
		\put(0.84,0){\vector(0.2,-1){0}}
		\put(-1.45,0.7){\vector(1,3){0}}
		\put(0.7,1.4){\vector(1,0){0}}
		\put(-1.3,1.5){\vector(1,0){0}}
		\put(-0.35,-1.5){\vector(0,-1){0}}
		\put(0.35,-1.5){\vector(0,-1){0}}
		\put(-2,-1.5){\vector(1,-1){0}}
		
		\put(0,-0.5){\vector(1,0){0}}
		\put(1.5,0.35){\vector(1,0){0}}

		\begingroup
		\scriptsize
		\put(-0.2,-2.8){$S_\Lambda :r=\pi$}
		\put(-0.2,2.8){$N_\Lambda :r=0$}
		\put(2,1.7){$\{r=r_c\}$}
		\put(2.7,0){$\{r=\frac{\pi}{2}\}$}
		\put(-4.7,0){$\{r=\frac{\pi}{2},\theta=0\}$}
		
		\put(1.5,0){$\varepsilon^-$}
		\put(0,-0.8){$\varepsilon^+$}
		
		\put(-2,0.5){$\gamma_0^+$}
		\put(-1.1,1.6){$\gamma_0^+(0)$}
		\put(0.8,1.2){$\dot{\gamma}_0^+(0)$}
		
		\put(-2.5,-1.5){$\mu_0$}
		\put(-0.8,-1.5){$\mu_{\theta_0}^+$}
		\put(0.5,-1.5){$\mu_{\theta_0}^-$}

		\endgroup
		
		\begingroup
		\tiny
		\put(-2.35,-0.8){$\left(r=\frac{\pi}{2},\theta=\theta_0\right)$}

		\endgroup
		\end{picture}
		\caption{Coordinates on the Zoll sphere.}	\label{fig2}
	\end{center}	
\end{figure}

From topological reasons it is known that the manifold of geodesics is a sphere, so we will start by introducing the local coordinates $(R,\Theta)$ on $M=\Sph^2$ except two points, the poles on $M$, that correspond to the two equators $\{r=\frac{\pi}{2}\}$ of different orientations $\ve^+$ and $\ve^-$, that is geodesics with Clairaut constants $c=\pm 1$. 

For a geodesic $\gamma(t)=(r(t),\theta(t))$ of $(\Lambda,g)$, with Clairaut constant $c\neq \pm 1$, we define the local coordinates $(R,\Theta)$ of the corresponding point $[\gamma]\in M$ as follows
\begin{equation}
(R,\Theta)=
\begin{cases}
(r(0),\theta(0))\textrm{ , if } c>0\\
(-r(0),\theta(0)+\pi)\textrm{ , if } c<0\\
(0,\theta(\dot \gamma(0))-\frac{\pi}{2})\textrm{ , if } c=0,
\end{cases}
\end{equation}
(see figures \ref{fig3a}, \ref{fig3b}).


\begin{figure}[h]
	\begin{center}
		
		\setlength{\unitlength}{1cm}
		\begin{picture}(0,5)(0,-2.5)
		\qbezier(-2.5,0)(-2.465,1.25)(-1.85,1.85)
		\qbezier(0,2.5)(-1.25,2.465)(-1.85,1.85)
		\qbezier(2.5,0)(2.465,1.25)(1.85,1.85)
		\qbezier(0,2.5)(1.25,2.465)(1.85,1.85)
		\qbezier(-2.5,0)(-2.465,-1.25)(-1.85,-1.85)
		\qbezier(0,-2.5)(-1.25,-2.465)(-1.85,-1.85)
		\qbezier(2.5,0)(2.465,-1.25)(1.85,-1.85)
		\qbezier(0,-2.5)(1.25,-2.465)(1.85,-1.85)
		\qbezier(-1.9,1.624807681)(0,1.1)(1.9,1.624807681)
		\qbezier[65](-1.9,1.624807681)(0,2.1)(1.9,1.624807681)
		\qbezier(-2.1,-1.356465997)(0,-2)(2.1,-1.356465997)
		\qbezier[65](-2.1,-1.356465997)(0,-1)(2.1,-1.356465997)
		
		\qbezier(2.5,0)(0,-1)(-2.5,0)
		\qbezier[65](2.5,0)(0,1)(-2.5,0)

		\qbezier(0,2.5)(-1,0)(0,-2.5)
		\qbezier[65](0,2.5)(1,0)(0,-2.5)
		
		\begingroup
		\scriptsize
		\put(0,2.8){$N_M:R=\frac{\pi}{2}$}
		\put(0,-2.8){$S_M:R=-\frac{\pi}{2}$}
		\put(-5,-1){$\{\Theta=0\}\equiv\left[\mu_{-\frac{\pi}{2}}\right]$}
		\put(2.6,0){$\{R=0\}$}
		\put(-0.9,1.6){$[\gamma^+_0]$}
		\put(-0.9,-2){$[\gamma^-_0]$}
		\put(-1.2,-0.8){$[\mu^+_{\theta_0}]$}
		\put(0.5,0.6){$[\mu^-_{\theta_0}]$}
		\endgroup

		\end{picture}
		\caption{The manifold of geodesics.}\label{fig3a}
	\end{center}
\end{figure}
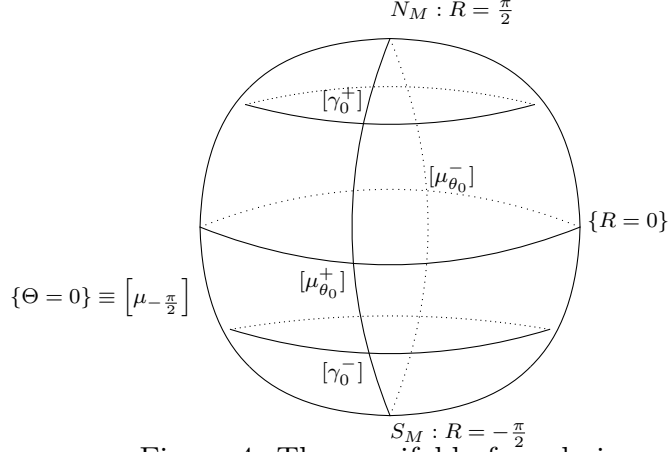

\begin{figure}[h]
	\begin{center}
		
		\setlength{\unitlength}{1.2cm}
		\begin{picture}(0,5)(0,-2.5)
		\qbezier(-2.5,0)(-2.465,1.25)(-1.85,1.85)
		\qbezier(0,2.5)(-1.25,2.465)(-1.85,1.85)
		\qbezier(2.5,0)(2.465,1.25)(1.85,1.85)
		\qbezier(0,2.5)(1.25,2.465)(1.85,1.85)
		\qbezier(-2.5,0)(-2.465,-1.25)(-1.85,-1.85)
		\qbezier(0,-2.5)(-1.25,-2.465)(-1.85,-1.85)
		\qbezier(2.5,0)(2.465,-1.25)(1.85,-1.85)
		\qbezier(0,-2.5)(1.25,-2.465)(1.85,-1.85)
		
		\qbezier(-2.5,0)(0,-1.2)(2.5,0)
		\qbezier(0,2.5)(-1,0)(0,-2.5)
		\qbezier[65](-2.5,0)(0,1.4)(2.5,0)
		
		\qbezier(-1.9,1.624807681)(0,1.1)(1.9,1.624807681)
		
		\qbezier(0,0.2)(0.33,2.1)(1,1)
		\put(1,1){\vector(1,-1){0}}
		
		\qbezier(0.5,0.2)(0.83,2.2)(1.5,1)
		\put(1.5,1){\vector(1,-1){0}}
		
		\qbezier(-0.8,0.2)(-0.17,2)(0.5,1)
		\put(0.5,1){\vector(1,-1){0}}
		
		\qbezier(-1.5,0.2)(-0.83,2)(0,1)
		\put(0,1){\vector(1,-1){0}}
		
		\qbezier(-1.7,0.7)(-0.83,1.9)(0,1.5)
		\put(0,1.48){\vector(1,-1){0}}
		
		\qbezier(-1.9,0.9)(-0.83,2.2)(0.5,1.7)
		\put(0.5,1.7){\vector(2,-1){0}}
		
		\qbezier(-2.1,1.1)(-0.83,2.4)(0.9,1.9)
		\put(0.9,1.9){\vector(2,-1){0}}
		
		\begingroup
		\scriptsize
		\put(0,2.7){$N_\Lambda :r=0$}
		\put(0,-2.7){$S_\Lambda :r=\pi$}
		\put(2,1.6){${r=r_c}$}
		\put(2.7,0){$\{r=\frac{\pi}{2}\}$}
		\put(-1.7,0){$\gamma_0^+$}
		\put(-1,0){$\gamma_1^+$}
		\put(0,0){$\gamma_2^+$}
		\put(0.5,0){$\gamma_3^+$}
		\put(-1.8,0.5){$\sigma_1^+$}
		\put(-2,0.7){$\sigma_2^+$}
		\put(-2.2,0.9){$\sigma_3^+$}
		\endgroup

		\end{picture}
		\caption{A geodesic variation on the Zoll sphere.}\label{fig3b}
	\end{center}
\end{figure}
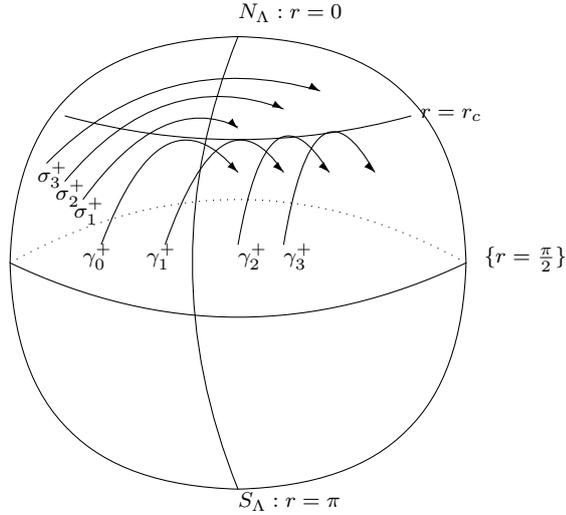

\begin{remark}
	 By considering a geodesic variation of the base geodesic $\gamma_0^+$ with the variation vector fields $Y_1$ and $Y_2$, we obtain the variations $\gamma_s^+$ and $\sigma_s^+$,respectively. By the coordinates system above they will give points on the parallel $R=r_c$ and the meridian $\Theta=\theta_0$.
\end{remark}

\begin{remark}
	 Let us consider geodesics on $(\Lambda,g)$ that start from the Nothern Pole $p_0:=N_\Lambda$, that is meridians. 
	 Let us also consider an orthonormal basis $v_0$, $v_1$ of $T_{p_0}\Lambda$ positive oriented (we use here $\Sph^2$ orientation) defined as follows. For a geodesic $\gamma_\theta^+(t)=(r(t),\theta)$, with $\theta$ constant, obtained from the variation of $\gamma_0^+(t)=(r(t),0)$, $t\in (0,\pi)$, with variation vector field $Y_2$ passing through the pole $p_0$, that is a meridian, we have $\gamma_\theta^+(0)=p_0$. We put 
	 \begin{equation}\label{IV1}
	 \dot{\gamma}_\theta(0)=\cos\theta\cdot v_0+\sin\theta \cdot v_1.
	 \end{equation}  
	
	In order to induce a smooth manifold structure on $M$, we identify the geodesic from $p_0$ and initial velocity \eqref{IV1} with the geodesic with same initial point and initial velocity $J(\dot{\gamma}_\theta(0))$, where $J:T_{p}\Lambda\to T_p\Lambda$ is linear mapping that gives the positive rotation by $\frac{\pi}{2}$ in any tangent plane to a point of $\Lambda$ (see Figure \ref{fig4a}, \ref{fig4b}). 
\end{remark}


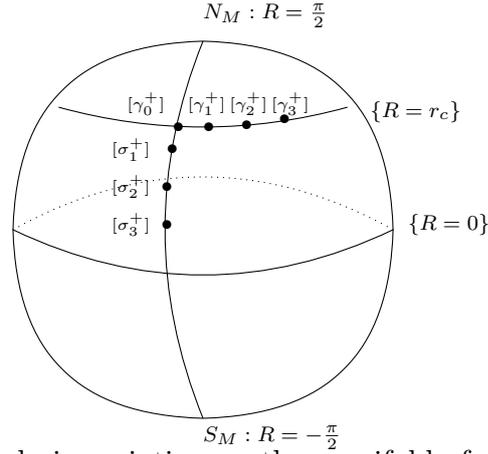
\begin{figure}[h]
	\begin{center}
		
		\setlength{\unitlength}{1cm}
		\begin{picture}(0,5)(0,-2.5)
		\qbezier(-2.5,0)(-2.465,1.25)(-1.85,1.85)
		\qbezier(0,2.5)(-1.25,2.465)(-1.85,1.85)
		\qbezier(2.5,0)(2.465,1.25)(1.85,1.85)
		\qbezier(0,2.5)(1.25,2.465)(1.85,1.85)
		\qbezier(-2.5,0)(-2.465,-1.25)(-1.85,-1.85)
		\qbezier(0,-2.5)(-1.25,-2.465)(-1.85,-1.85)
		\qbezier(2.5,0)(2.465,-1.25)(1.85,-1.85)
		\qbezier(0,-2.5)(1.25,-2.465)(1.85,-1.85)
		
		\qbezier(-2.5,0)(0,-1.2)(2.5,0)
		\qbezier(0,2.5)(-1,0)(0,-2.5)
		\qbezier[65](-2.5,0)(0,1.4)(2.5,0)
		
		\qbezier(-1.9,1.624807681)(0,1.1)(1.9,1.624807681)

		\begingroup
		\scriptsize
		\put(0,2.8){$N_M:R=\frac{\pi}{2}$}
		\put(0,-2.8){$S_M:R=-\frac{\pi}{2}$}
		\put(2.2,1.5){$\{R=r_c\}$}
		\put(2.7,0){$\{R=0\}$}
		\put(-0.55,0){$\bullet$}
		\put(-0.55,0.5){$\bullet$}
		\put(-0.48,1){$\bullet$}
		\put(-0.4,1.3){$\bullet$}
		\put(0,1.3){$\bullet$}
		\put(0.5,1.32){$\bullet$}
		\put(1,1.4){$\bullet$}
		\tiny
		\put(-1,1.6){$[\gamma^+_0]$}
		\put(-0.2,1.6){$[\gamma^+_1]$}
		\put(0.35,1.6){$[\gamma^+_2]$}
		\put(0.9,1.6){$[\gamma^+_3]$}
		\put(-1.2,1){$[\sigma^+_1]$}
		\put(-1.2,0.5){$[\sigma^+_2]$}
		\put(-1.2,0){$[\sigma^+_3]$}
		\endgroup
		
		\end{picture}
		\caption{A geodesic variation on the manifold of geodesics.}
	\end{center}
\end{figure}

\begin{figure}[h]
	\begin{center}
		
		\setlength{\unitlength}{1cm}
		\begin{picture}(0,5)(0,-2.5)
		\qbezier(-2.5,0)(-2.465,1.25)(-1.85,1.85)
		\qbezier(0,2.5)(-1.25,2.465)(-1.85,1.85)
		\qbezier(2.5,0)(2.465,1.25)(1.85,1.85)
		\qbezier(0,2.5)(1.25,2.465)(1.85,1.85)
		\qbezier(-2.5,0)(-2.465,-1.25)(-1.85,-1.85)
		\qbezier(0,-2.5)(-1.25,-2.465)(-1.85,-1.85)
		\qbezier(2.5,0)(2.465,-1.25)(1.85,-1.85)
		\qbezier(0,-2.5)(1.25,-2.465)(1.85,-1.85)
		\put(0,1.3){\circle{1.5}}
		
		\qbezier(0,-2.5)(-0.65,-2.1)(-0.65,-0.65)
		\qbezier(-0.65,-0.65)(-0.5,1.5)(1,2.291287847)
		
		\qbezier(-0.21,1)(0.145,0.7)(0.5,1)
		\put(0.5,1){\vector(1,1){0}}
		\qbezier(-0.05,1.35)(2,1.35)(2,-1.5)
		\put(1.9,-0.5){\vector(0,-1){0}}
		\qbezier(-0.05,1.35)(1,1.35)(1,-2.291287847)
		\put(0.9,-0.5){\vector(0,-1){0}}
		\qbezier(-0.05,1.35)(0.2,1.35)(0.2,-2.491987159)
		\put(0.2,-0.5){\vector(0,-1){0}}
		\qbezier(-0.05,1.35)(2.5,1.35)(2.5,0)
		\put(1.9,0.95){\vector(1,-1){0}}
		\put(-0.65,-0.7){\vector(0,-1){0}}
		
		\qbezier(-1.5,0)(-1.2,1.15)(-0.05,1.35)
		\qbezier(-1.5,-0.5)(-1.2,0.55)(-0.4,0.65)
		\qbezier(-0.4,0.65)(0.3,0.45)(0.4,-0.4)
		\put(0.3,0){\vector(0.5,-1){0}}

		\begingroup
		\tiny
		\put(-0.1,0.7){$\theta$}
		\put(-0.5,1.7){$r=0$}
		\put(1,-0.5){$\tau^+_0$}
		\put(2,-0.5){$\tau^+_1$}
		\put(1.9,0.5){$\tau^+_2$}
		\put(-1.7,0.5){$\tau^+_0$}
		\put(-1.7,-0.8){$\gamma^+_0$}
		\put(-1,-1){$\mu^+_0$}
		
		\endgroup
		\end{picture}
		\caption{Meridians on the Zoll sphere.}\label{fig4a}
	\end{center}
\end{figure}
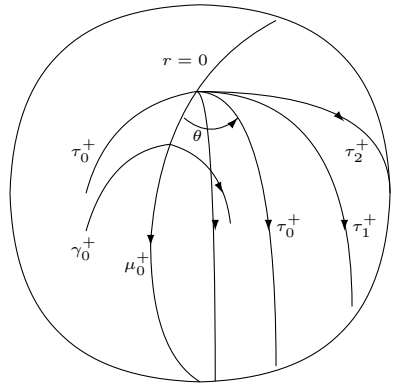

\begin{figure}[h]
	\begin{center}
		
		\setlength{\unitlength}{1cm}
		\begin{picture}(0,5)(0,-2.5)
		\qbezier(-2.5,0)(-2.465,1.25)(-1.85,1.85)
		\qbezier(0,2.5)(-1.25,2.465)(-1.85,1.85)
		\qbezier(2.5,0)(2.465,1.25)(1.85,1.85)
		\qbezier(0,2.5)(1.25,2.465)(1.85,1.85)
		\qbezier(-2.5,0)(-2.465,-1.25)(-1.85,-1.85)
		\qbezier(0,-2.5)(-1.25,-2.465)(-1.85,-1.85)
		\qbezier(2.5,0)(2.465,-1.25)(1.85,-1.85)
		\qbezier(0,-2.5)(1.25,-2.465)(1.85,-1.85)

		\qbezier(-2.5,0)(0,-1.2)(2.5,0)
		\qbezier(0,2.5)(-1,0)(0,-2.5)
		\qbezier[65](-2.5,0)(0,1.4)(2.5,0)
		\qbezier[45](-0.55,-0.555)(-0.55,-0.555)(1.1,0.6)
		\qbezier[45](-1.4,-0.4)(-1.4,-0.4)(1.9,0.3)
		
		\begingroup
		\scriptsize
		\put(0,2.8){$N_M:R=\frac{\pi}{2}$}
		\put(0,-2.8){$S_M:R=-\frac{\pi}{2}$}
		\put(2.7,0){$\{R=0\}$}\
		\put(1,0.5){$\bullet$}
		\put(1.8,0.2){$\bullet$}
		\put(-2.3,-0.2){$\bullet$}
		\put(-2,-0.3){$\bullet$}
		\put(-1.5,-0.5){$\bullet$}
		\put(-0.6,-0.65){$\bullet$}
		\tiny
		\put(1,0.7){$[\mu^-_0]$}
		\put(-1,-1){$[\mu^+_0]$}
		\put(1.8,0.4){$[\tau^-_0]$}
		\put(-1.8,-0.7){$[\tau^+_0]$}
		\put(-2.4,-0.5){$[\tau^+_1]$}
		\put(-2.2,0){$[\tau^+_2]$}
		\endgroup
		
		\end{picture}
		\caption{Equator points on the manifold of geodesics.}\label{fig4b}
	\end{center}
\end{figure}
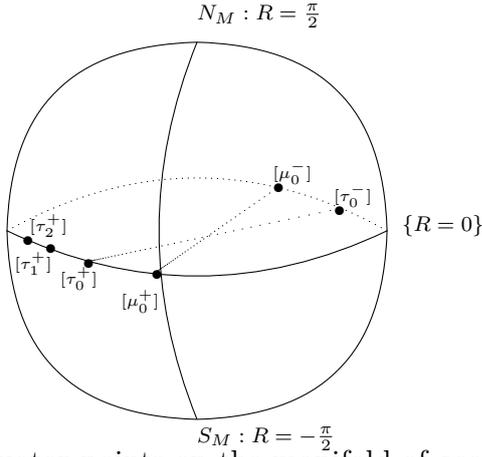


\section{The Finsler metric on the manifold of geodesics}\label{sec:the Finsler metric}

\subsection{The Finslerian indicatrix}\label{subsec:the indicatrix}

	We will express the Jacobi vector fields $Y_1$, $Y_2$ in the canonical coordinates of $TM$, that is
	\begin{proposition}
		We have
		\begin{equation}\label{Y1 Y2 by R, Theta}
		\begin{split}
		Y_1 & =c_1\frac{\partial}{\partial \Theta}\\
		Y_2 & =\frac{1}{c_1\cos r_c}\frac{\partial}{\partial R},
		\end{split}
		\end{equation}
		where $c_1:=\frac{1+h(\cos r_c)}{\cos r_c}$, $r_c=\arcsin c$, $0<c<\frac{\pi}{2}$.
		Moreover 
		\begin{equation}\label{comput y_1', y_2'}
		\begin{split}
		y'_1 & = c_1\frac{\cos r}{1+h(\cos r)}\\
		y'_2 & =\pm\frac{1}{c_1}\frac{\cos r}{1+h(\cos r)}
		\int_{r_c}^r \frac{\sin s}{\cos^2s}\Bigl[1- 
		\frac{\cos s\cdot h'(\cos s)}{1+h(\cos s)} \Bigr]
		\Bigl[\frac{1+h(\cos s)}{\sqrt{\sin^2s-c^2}} \Bigr] ds,
		\end{split}
		\end{equation}
		where $R\leq r \leq \pi-R$, $0<R<\frac{\pi}{2}$, and the sign $\pm$ is the sign of $\dot{r}(t)$.
	\end{proposition}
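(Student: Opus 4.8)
The plan is to establish the four identities in two groups: first the coordinate expressions for the tangent vectors $Y_1,Y_2\in T_{[\gamma]}M$, and then the derivative formulas for $y_1$ and $y_2$. For the first group I would use the identification of $T_{[\gamma]}M$ with the space of normal Jacobi fields along $\gamma$, and realize each of the coordinate vector fields $\partial/\partial\Theta,\ \partial/\partial R$ by an explicit geodesic variation of $\gamma$.

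The variation $\gamma_\varepsilon(t)=(r(t),\theta(t)+\varepsilon)$ coming from the rotational symmetry of $(\Lambda,g)$ keeps the Clairaut constant (hence $R$) fixed and sends $\Theta\mapsto\Theta+\varepsilon$, so it represents $\partial/\partial\Theta$; its variation field is the Killing field $\partial/\partial\theta|_{\gamma(t)}$, whose component normal to $\dot\gamma$ is, by \eqref{decom d theta}, equal to $\pm\sqrt{\sin^2 r-c^2}\,n(t)=y(t)n(t)=c_1^{-1}Y_1(t)$ in view of \eqref{y_1,y_2 formulas}. Hence $Y_1=c_1\,\partial/\partial\Theta$. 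For $Y_2$ I would take the variation through the geodesics with Clairaut constant $\sin(r_c+\varepsilon)$ whose closest point to the pole $N_\Lambda$ is $(r_c+\varepsilon,\theta_0)$ (attained at parameter $0$); this represents $\partial/\partial R$. Its variation field $J$ has $J(0)=\partial/\partial r|_{\gamma(0)}$, which — since $\sin r_c=|c|$ kills the $\partial/\partial\theta$--part of $n(0)$ in \eqref{flame1} — is a scalar multiple of $n(0)$; and differentiating $\dot\gamma_\varepsilon(0)=\frac{1}{\sin(r_c+\varepsilon)}\,\partial/\partial\theta$ covariantly in $\varepsilon$, using $\Gamma^\theta_{r\theta}=\cot r$ and $\Gamma^r_{r\theta}=0$, gives $J'(0)=0$. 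Comparing with $Y_2(0)=n(0),\ Y_2'(0)=0$ then identifies $J$ with a constant multiple of $Y_2$, and since $c_1\cos r_c=1+h(\cos r_c)$ one reads off $Y_2=(c_1\cos r_c)^{-1}\,\partial/\partial R$.

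The formula $y_1'=c_1\cos r/(1+h(\cos r))$ is immediate from $y_1=c_1\,y$ in \eqref{y_1,y_2 formulas} together with the expression \eqref{comput y'} for $y'$. For $y_2'$ I would differentiate the explicit formula for $y_2$ in Proposition \ref{prop:Jacobi fields Y1 Y2}: using the Jacobi equation $y_1''=-K\,y_1$ (with $K=\hat G$ the Gauss curvature \eqref{Gauss curv for Zoll surf}), two of the resulting terms cancel and one obtains $y_2'(t)=-y_1'(t)\int_0^t \frac{K(\gamma(s))}{[y_1'(s)]^2}\,ds$. Then I would substitute $[y_1'(s)]^2=c_1^2\cos^2 s/[1+h(\cos s)]^2$ and the value of $\hat G$, reducing the integrand to $\bigl(1-\cos s\,h'(\cos s)/(1+h(\cos s))\bigr)/(c_1^2\cos^2 s)$, and change the integration variable from arclength to the latitude $r$ via $dt=\pm\sin r\,[1+h(\cos r)]\,dr/\sqrt{\sin^2 r-c^2}$ (from \eqref{Zoll geod}, with $\pm$ the sign of $\dot r$); this gives exactly the displayed integral, valid on the arc where $r$ runs monotonically from $r_c$, the $1/\cos^2 s$ singularity of the integrand at $s=\pi/2$ being absorbed by the vanishing prefactor $\cos r$ so that $y_2'$ stays smooth there.

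I expect the sign bookkeeping to be the main obstacle: the choice of branch of $\sqrt{\sin^2 r-c^2}$ along $\gamma$ (equivalently the sign of $\dot r$), the orientation convention fixing the direction of $n$, and the orientation of the coordinate $\partial/\partial R$ near the poles of $M$ all feed into the computation, and reconciling them is what pins down the $\pm$ in the $y_2'$ formula together with the exact constants in the $Y_i$ formulas. The analytic behaviour of the $y_2'$ integral near $r=\pi/2$ also has to be treated with some care.
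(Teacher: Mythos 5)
Your proposal is correct and follows essentially the same route as the paper: the derivative formulas are obtained exactly as in the text (differentiate $y_2$ from Proposition \ref{prop:Jacobi fields Y1 Y2}, cancel two terms via the Jacobi equation to get $y_2'=-y_1'\int_0^t G/[y_1']^2\,ds$, then change variables $t\mapsto r$ using \eqref{Zoll geod}), while for \eqref{Y1 Y2 by R, Theta} the paper simply identifies the normal component of $\partial/\partial\theta$ with $\partial/\partial\Theta$ and rescales $\partial/\partial r|_{\gamma(0)}$ by its length $1+h(\cos r_c)$. Your explicit geodesic variations (rotation for $\partial/\partial\Theta$, varying the Clairaut constant for $\partial/\partial R$, with the check $J'(0)=0$ via the Christoffel symbols) merely spell out what the paper leaves implicit, and your sign caveats match the level of care taken in the paper itself.
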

	
	\begin{proof}
Recall that by definition $Y_1(t)=c_1Y(t)$, where $Y(t)$ is the normal component of $\frac{\partial}{\partial \theta}|_{\gamma(t)}$, hence by the definition of our local coordinates on $M$, first formula in \eqref{Y1 Y2 by R, Theta} follows immediately, by identifying $Y(t)$ with $\frac{\partial}{\partial \Theta}$.

Likewise, if we observe that the unit length vector field in the $\frac{\partial}{\partial r}|_{\gamma(0)}$ direction is 
$$
\frac{1}{\Vert\frac{\partial}{\partial r}|_{\gamma(0)}\Vert
	}
	\frac{\partial}{\partial r}|_{\gamma(0)}=
	\frac{1}{1+h(\cos r_c)}\frac{\partial}{\partial r}|_{\gamma(0)}
$$ 
we obtain the second formula in \eqref{Y1 Y2 by R, Theta}.

For the second set of formulas, observe that by definition $y_1(t)=c_1y(t)$, that is $y_1'(t)=c_1y'(t)$ and using \eqref{comput y'} the first formula in \eqref{comput y_1', y_2'} follows immediately.

Next, we start by taking the derivative of $y_2(t)$ from \eqref{y_1,y_2 formulas}, that is
$$
y_2'(t)=\frac{d}{dt}[\frac{1}{y_1'(t)}]-y_1'(t)\int_0^t
\frac{G(\gamma(s))}{[y'_1(s)]^2}ds-y_1(t)\frac{d}{dt}\int_0^t
\frac{G(\gamma(s))}{[y'_1(s)]^2}ds,
$$
and using Leibnitz chain formula and the Jacobi equation it results
$$
y_2'(t)=-y_1'(t)\int_0^t
\frac{G(\gamma(s))}{[y'_1(s)]^2}ds.
$$
	By using formula for $y_1'(t)$, the expression of the Gauss curvature \eqref{Gauss curv for Zoll surf},  and \eqref{ds over dr}, the desired formula follows immediately after changing the variable from curve parameter $t$ to $r$ due to the obvious relation 
	\begin{equation}\label{integral eq 1}
	\int_0^t
	\frac{G(\gamma(s))}{[y'_1(s)]^2}ds=\frac{1}{c_1^2}
	\int_{r_c}^r \frac{\sin s}{\cos^2s}\Bigl[1- 
	\frac{\cos s\cdot h'(\cos s)}{1+h(\cos s)} \Bigr]
	\Bigl[\pm \frac{1+h(\cos s)}{\sqrt{\sin^2s-c^2}} \Bigr] ds
	\end{equation}
		
		$\qedd$
	\end{proof}
	We recall that due to the local coordinates definition, we have $c=\sin R$, $R=r_c$. 
	
	Observe that the embedding $\iota:\Sigma\to TM$ reads now
	$$
	\iota(\hat{\gamma}(t))=-c_1y_2(t)\frac{\partial}{\partial \Theta}+
	\frac{y_1(t)}{1+h(\cos R)}\frac{\partial}{\partial R},
	$$
	that is, if we denote by $(R,\Theta;v_1,v_2)$ the canonical coordinates on $TM$, 
	\begin{equation}\label{gen formula iota in v1, v2}
	\iota(\hat{\gamma}(t))=v_1(r)\frac{\partial}{\partial R}+
	v_2(r)\frac{\partial}{\partial \Theta},
	\end{equation}
	hence the geodesic flow parametric equations read
	$$
	 v_1(r)= \frac{y_1(t)}{1+h(\cos R
	 	)}
	 =\pm\frac{1}{\cos R}\cdot \sqrt{\sin^2r-c^2}
	$$
	and 
	\begin{equation*}
	\begin{split}
	v_2(r)&=-c_1y_2(t)=-c_1\frac{1}{y'_1(t)}+c_1y_1(t)\int_0^t
	\frac{G(\gamma(s))}{[y'_1(s)]^2}ds\\
&	=-\frac{1+h(\cos r)}{\cos r}	
\pm c_1 \frac{1+h(\cos r_c)}{\cos r_c} \sqrt{\sin^2r-c^2}\int_0^t
\frac{G(\gamma(s))}{[y'_1(s)]^2}ds\\
&= -\frac{1+h(\cos r)}{\cos r}	
+\sqrt{\sin^2r-c^2}\int_{r_c}^r \frac{\sin s}{\cos^2s}\Bigl[1- 
\frac{\cos s\cdot h'(\cos s)}{1+h(\cos s)} \Bigr]
\Bigl[\pm \frac{1+h(\cos s)}{\sqrt{\sin^2s-c^2}} \Bigr] ds,
	\end{split}
	\end{equation*}
	where we have used definition formula for $c_1$, \eqref{y_1,y_2 formulas} and \eqref{integral eq 1}.

	Hence, we get 
	\begin{theorem}\label{thm: indicatrix param eqs}
			The parametric equations of the corresponding Finsler metric of constant flag curvature $K=1$ are 
		\begin{equation}\label{param eq indic ver1}
		\begin{split}
		& v_1(r)= \pm\frac{1}{\cos R}\cdot \sqrt{\sin^2r-c^2}\\
		& v_2(r)= -\frac{1+h(\cos r)}{\cos r}	
		+\sqrt{\sin^2r-c^2}\int_{r_c}^r \frac{\sin s}{\cos^2s}\Bigl[1- 
		\frac{\cos s\cdot h'(\cos s)}{1+h(\cos s)} \Bigr]
		\Bigl[ \frac{1+h(\cos s)}{\sqrt{\sin^2s-c^2}} \Bigr] ds.
		\end{split}
		\end{equation} 	
	\end{theorem}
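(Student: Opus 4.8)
The theorem merely packages together the computations carried out in this section, so the plan is to substitute the explicit Jacobi-field formulas into the embedding $\iota$ and to rewrite the outcome in terms of the coordinate $r$ along the geodesic.

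I would begin from the embedding formula $\iota(\hat\gamma(t)) = a(t)Y_1 + b(t)Y_2$ with $a(t) = -y_2(t)$ and $b(t) = y_1(t)$, established in the Proposition of Subsection~\ref{subsec: the embedding}, and combine it with the coordinate expressions \eqref{Y1 Y2 by R, Theta}, namely $Y_1 = c_1\,\partial/\partial\Theta$ and $Y_2 = \tfrac{1}{c_1\cos r_c}\,\partial/\partial R$. Since the definition of the coordinates $(R,\Theta)$ forces $c = \sin R$ and $r_c = R$, one has $\tfrac{1}{c_1\cos r_c} = \tfrac{1}{1+h(\cos R)}$, so that $\iota(\hat\gamma(t)) = \tfrac{y_1(t)}{1+h(\cos R)}\,\partial/\partial R - c_1 y_2(t)\,\partial/\partial\Theta$. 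Comparing with \eqref{gen formula iota in v1, v2} yields $v_1(r) = \tfrac{y_1(t)}{1+h(\cos R)}$ and $v_2(r) = -c_1 y_2(t)$, and it remains only to evaluate these two scalars.

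For $v_1$ I would insert $y_1(t) = c_1 y(t)$, $y(t) = \pm\sqrt{\sin^2 r - c^2}$, and $c_1 = \tfrac{1+h(\cos R)}{\cos R}$; the factor $1+h(\cos R)$ cancels and one is left with $v_1(r) = \pm\tfrac{1}{\cos R}\sqrt{\sin^2 r - c^2}$. For $v_2$ I would substitute the formula $y_2(t) = \tfrac{1}{y_1'(t)} - y_1(t)\int_0^t \tfrac{G(\gamma(s))}{[y_1'(s)]^2}\,ds$ from Proposition~\ref{prop:Jacobi fields Y1 Y2} (with $K = G$ the Gauss curvature). The term $-c_1/y_1'(t)$ becomes $-\tfrac{1+h(\cos r)}{\cos r}$ by the first line of \eqref{comput y_1', y_2'}. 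In the remaining term, $c_1 y_1(t)\int_0^t \tfrac{G(\gamma(s))}{[y_1'(s)]^2}\,ds = c_1^2\, y(t)\int_0^t \tfrac{G(\gamma(s))}{[y_1'(s)]^2}\,ds$, and I would convert the integral from the curve parameter $s$ to $r$ via $\tfrac{dt}{dr} = \pm\tfrac{\sin r\,[1+h(\cos r)]}{\sqrt{\sin^2 r - \sin^2 r_c}}$ from \eqref{ds over dr}, inserting the curvature formula \eqref{Gauss curv for Zoll surf}; this is precisely the identity \eqref{integral eq 1}, whose prefactor $\tfrac{1}{c_1^2}$ cancels the $c_1^2$ standing in front, and the displayed expression for $v_2$ drops out.

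The genuinely delicate point, and the one I would spend effort on, is the sign bookkeeping: the $\pm$ in $v_1$ and the sign arising inside the $v_2$-integral are both the sign of $\dot r(t)$, so the map $r \mapsto (v_1(r), v_2(r))$ on $[R, \pi - R]$ is really two-valued, describing the outgoing and the returning arc of the curve $t \mapsto \hat\gamma(t)$ in $\Sigma$; these two branches together trace the full closed indicatrix $\Sigma_{[\gamma]} \subset T_{[\gamma]}M$. One then has to check that the branches glue continuously --- in fact smoothly --- at the apsidal values $r = R$ and $r = \pi - R$, where $\sqrt{\sin^2 r - c^2}$ vanishes: there $v_1 = 0$ from both branches, the term $-\tfrac{1+h(\cos r)}{\cos r}$ is smooth, and the integral term in $v_2$ carries the vanishing factor $\sqrt{\sin^2 r - c^2}$ while the integral itself stays bounded, so $v_2$ too agrees from both sides. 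Once the signs are handled, the theorem follows from the substitutions above.
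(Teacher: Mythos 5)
Your proposal is correct and follows essentially the same route as the paper: substitute $a=-y_2$, $b=y_1$ into the embedding $\iota(\hat\gamma(t))=a(t)Y_1+b(t)Y_2$, use the coordinate expressions $Y_1=c_1\,\partial/\partial\Theta$, $Y_2=\tfrac{1}{c_1\cos r_c}\,\partial/\partial R$, and convert the integral from $t$ to $r$ via \eqref{integral eq 1}, exactly as in the computation preceding Theorem \ref{thm: indicatrix param eqs}. Your additional discussion of the sign bookkeeping and the smooth gluing of the two branches at $r=R$ and $r=\pi-R$ is a point the paper leaves implicit, and it is handled correctly.
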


\begin{remark}
\quad Using the equality
\begin{equation*}
\int\frac{\sin s}{\cos^2 s}\frac1{\sqrt{\sin^2-c^2}}\,ds=\frac1{1-c^2}\frac{\sqrt{\sin^2 s-c^2}}{\cos s}+\text{constant}\,,
\end{equation*}
one can rewrite $v_2$ as
\begin{gather*}
v_2=-[1+h(\cos r)]\frac{\cos r}{\cos^2 R}-
\frac{\sin^2r-c^2}{\cos^2 R}\,h'(\cos r)\\
-\frac{\sqrt{\sin^2 r-c^2}}{\cos^2 R}\int_R^r\sin s\sqrt{\sin^2 s - c^2}\,h''(\cos s)\,ds,
\end{gather*}
which eliminates the apparent singularity
at $r=\pi/2$ in the former expression.
	\end{remark}

\begin{theorem}\label{thm: G vs indic curv}
The strong convexity of the Finsler indicatrix \eqref{param eq indic ver1} is equivalent to the curvature condition $G>0$ of the Zoll metric. 
\end{theorem}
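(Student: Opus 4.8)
The plan is to reduce strong convexity of the Finsler fundamental tensor along a fibre $\Sigma_{[\gamma]}\subset T_{[\gamma]}M$ to a single determinant inequality for the indicatrix curve, and then — using that the indicatrix is built out of Jacobi fields — to recognize that this inequality says precisely $G>0$. \textbf{Step 1 (the indicatrix is a Jacobi curve).} Fix $[\gamma]\in M$ with coordinates $(R,\Theta)$, so $c=\sin R$ and $r_c=R$. By the Proposition of Subsection~\ref{subsec: the embedding} together with \eqref{Y1 Y2 by R, Theta}, the fibre $\Sigma_{[\gamma]}$ is the image of the curve $\sigma(t):=\iota(\hat\gamma(t))=-y_2(t)\,Y_1+y_1(t)\,Y_2$, where $Y_1=c_1\,\partial/\partial\Theta$ and $Y_2=(c_1\cos r_c)^{-1}\,\partial/\partial R$ are \emph{fixed} vectors spanning $T_{[\gamma]}M$ and $y_1,y_2$ are the normalized Jacobi coefficients of Proposition~\ref{prop:Jacobi fields Y1 Y2}. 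Differentiating in the fixed vector space $T_{[\gamma]}M$ and using the Jacobi equation $y_i''=-G(\gamma(t))\,y_i$ gives the vector ODE $\ddot\sigma(t)=-G(\gamma(t))\,\sigma(t)$. Moreover the Wronskian $y_1y_2'-y_1'y_2$ is constant (the Jacobi equation has no first-order term) and equals $-1$ by the initial conditions \eqref{Jacobi IC}; writing $[u,v]$ for the determinant of $(u,v)$ in the coordinate frame $(\partial/\partial R,\partial/\partial\Theta)$, substitution gives $[\sigma(t),\dot\sigma(t)]\equiv(\cos R)^{-1}$, a positive constant along the whole indicatrix. In particular $\sigma$ is a regular closed curve whose polar angle about the origin of $T_{[\gamma]}M$ is strictly increasing.

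\textbf{Step 2 (a plane-curve criterion).} For such a curve, the norm $F$ defined by $F(\lambda\sigma(t))=\lambda$ has fundamental tensor $g$ satisfying $g(\sigma,\sigma)=1$, $g(\sigma,\dot\sigma)=0$ and $g(\dot\sigma,\dot\sigma)=[\dot\sigma,\ddot\sigma]/[\sigma,\dot\sigma]$: indeed, differentiating $F(\sigma(t))\equiv1$ once gives $dF(\dot\sigma)=0$, differentiating again and using $\mathrm{Hess}\,F=g-dF\otimes dF$ on $\{F=1\}$ gives $g(\dot\sigma,\dot\sigma)=-dF(\ddot\sigma)$, and $dF=[\,\cdot\,,\dot\sigma]/[\sigma,\dot\sigma]$ (forced by $dF(\sigma)=1$, $dF(\dot\sigma)=0$) yields the formula. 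Since $\{\sigma,\dot\sigma\}$ is a basis with $g(\sigma,\sigma)=1>0$ and $g(\sigma,\dot\sigma)=0$, positive definiteness of $g$ along $\Sigma_{[\gamma]}$, i.e. strong convexity of the indicatrix there, is equivalent to $[\dot\sigma,\ddot\sigma]>0$ at every point of it.

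\textbf{Step 3 (conclusion).} Combining the two steps, $[\dot\sigma,\ddot\sigma]=[\dot\sigma,-G\sigma]=G\,[\sigma,\dot\sigma]$, whence $g(\dot\sigma,\dot\sigma)=G(\gamma(t))$ identically along $\Sigma_{[\gamma]}$. Thus $\Sigma_{[\gamma]}$ is strongly convex if and only if $G(\gamma(t))>0$ for every $t$, i.e. at every point of the geodesic $\gamma$. Letting $R$ range over $(-\tfrac{\pi}{2},\tfrac{\pi}{2})$ (with the identification fixed in Subsection~\ref{subsec:coordinates on M}), the corresponding geodesics sweep $\Lambda$ minus the equator $\{r=\pi/2\}$ and the two poles; since $G$ depends only on $\cos r$ (as $h$ is odd) and is continuous, this shows that all fibres $\Sigma_x$, $x\in M$, are strongly convex if and only if $G>0$ on all of $\Lambda$. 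One could instead read the same proportionality off \eqref{param eq indic ver1} and \eqref{Gauss curv for Zoll surf} directly, but the Jacobi-field argument makes the role of $G$ transparent.

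\textbf{Main obstacle.} The ODE and determinant manipulations are routine; the delicate issues are global. One must argue that $\sigma$ winds exactly once around the origin, so that $F$ is a genuine Minkowski norm and not a multivalued object — this follows from $[\sigma,\dot\sigma]$ being a positive constant together with the construction of $M$ in Theorem~\ref{thm: Zoll to Finsler}, but deserves to be stated. One must also upgrade ``$G>0$ along every swept geodesic for all admissible $R$'' to ``$G>0$ on all of $\Lambda$'' by a separate continuity and symmetry argument covering the poles of $M$ (which lie outside the coordinate chart) and the boundary circles $r=0,\pi$ of $\Lambda$.
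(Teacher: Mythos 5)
Your argument is essentially the paper's own proof: the paper likewise uses that the indicatrix coordinates are (fixed constants times) the Jacobi coefficients $y_1,y_2$, applies the Jacobi equation, and identifies the convexity quantity of the indicatrix curve, $k(r)=\dfrac{\ddot v_1\dot v_2-\ddot v_2\dot v_1}{\dot v_1 v_2-\dot v_2 v_1}=\Bigl(\dfrac{dt}{dr}\Bigr)^2 G$, with a positive multiple of $G$. Your version merely works in the parameter $t$ instead of $r$ and spells out the fundamental-tensor/Wronskian justification of the determinant criterion that the paper takes for granted, so it is correct and follows the same route.
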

\begin{proof}
Indeed, observe that we have 
	\begin{equation}
	\begin{split}
		v_1(r)& = \frac{1}{1+h(\cos R)}\ y_1(t)\\
		\dot{v}_1(r) &=\frac{1}{1+h(\cos R)}\ y_1'(t)\frac{dt}{dr}\\
		\ddot{v}_1(r)&=\frac{1}{1+h(\cos R)}\ 
		\Bigl[
		y_1''(t)\Bigl(\frac{dt}{dr}\Bigr)^2+y_1'(t)\frac{d^2t}{dr^2}
		\Bigr]
	\end{split}
	\end{equation}

	and 
\begin{equation}
\begin{split}
	v_2(r)& =-\frac{1+h(\cos R)}{\cos R}\ y_2(t)\\
		\dot{v}_2(r) &=-\frac{1+h(\cos R)}{\cos R}\ y_2'(t)\frac{dt}{dr}\\
		\ddot{v}_2(r)&=-\frac{1+h(\cos R)}{\cos R}\ 
		\Bigl[
		y_2''(t)\Bigl(\frac{dt}{dr}\Bigr)^2+y_2'(t)\frac{d^2t}{dr^2}
		\Bigr]
\end{split}
\end{equation}
where $\dot{v}_i$ and $\ddot{v}_i$ are the first and second derivative of $v_i(r)$ with respect to $r$, for $i\in\{1,2\}$. 

We can compute now the curvature $k(r)$ of the indicatrix curve 
$(v_1(r),v_2(r))$, and by using the Jacobi equation, we get
\begin{equation}
k(r):=\dfrac{\ddot{v}_1\dot{v}_2-\ddot{v}_2\dot{v}_1}{\dot{v}_1v_2-\dot{v}_2v_1}=\Bigl(\frac{dt}{dr}\Bigr)^2G,
\end{equation}
hence the conclusion follows from our construction.
$\qedd$
\end{proof}


	\subsection{The round sphere case $h=0$}\label{Example: Riemannian case}
	The trivial example of a Zoll sphere of revolution is the canonical Riemannian sphere given by $h=0$. In this case, the indicatrix equations read
	\begin{equation*}
	\begin{split}
	& v_1(r)= \frac{1}{\cos R}\cdot \sqrt{\sin^2r-c^2}\\
	& v_2(r)=  -\frac{1}{\cos r}	+
	\frac{1}{\cos^2 R}
	\frac{{\sin^2 r-c^2}}{\cos r}
	=
	\frac{\sin^2r+\cos^2 R-c^2}{\cos^2R\cos r}=\frac{\sin^2r-1}{\cos^2R\cos r}
	=-\frac{\cos r}{\cos^2 R}.
	\end{split}
	\end{equation*}

	Observe that this is equivalent to 
	\begin{equation*}
	\begin{split}
	\sin^2 r & =v_1^2 \cos^2R+c^2\\
	\cos^2 r & = v_2^2\cos^4 R
	,
	\end{split}
	\end{equation*}
	and by adding these relations we get 
	\begin{equation}\label{ellipse for h=0}
	v_1^2+\cos^2R\cdot v_2^2=1,
	\end{equation}
	where we have used $1-c^2=\cos ^2R$. 
	
	It is clear that this is an ellipse in $T_pM$ corresponding to the Riemannian canonical sphere, therefore we have
	\begin{proposition}
		In the case $h=0$, the corresponding metrical structure on $M$ is the canonical Riemannian sphere. 
	\end{proposition}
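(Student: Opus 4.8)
The plan is to extract the Finsler fundamental function directly from the indicatrix equation \eqref{ellipse for h=0} and then recognize the resulting structure as the round metric on $\Sph^2$. Equation \eqref{ellipse for h=0} says that, in the canonical fibre coordinates $(v_1,v_2)$ on $T_{[\gamma]}M$ adapted to the frame $\bigl(\partial/\partial R,\partial/\partial\Theta\bigr)$, the indicatrix $\Sigma_{[\gamma]}$ is the ellipse $v_1^2+\cos^2R\cdot v_2^2=1$, centred at the origin with axes along the coordinate directions. Hence the fundamental function of the Finsler structure on $M$ must be
$$
F(R,\Theta;v_1,v_2)=\sqrt{\,v_1^2+\cos^2R\cdot v_2^2\,},
$$
and computing the fundamental tensor $g_{ij}=\tfrac12\,\partial^2F^2/\partial v^i\partial v^j$ gives the constant (in $v$) positive definite matrix $\operatorname{diag}(1,\cos^2R)$. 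Therefore the induced structure on $M$ is Riemannian, namely
$$
g_M=dR\otimes dR+\cos^2R\,d\Theta\otimes d\Theta.
$$

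Next I would identify $g_M$. With $(R,\Theta)\in(-\tfrac{\pi}{2},\tfrac{\pi}{2})\times\R\slash 2\pi\mathbb{Z}$, the expression $dR^2+\cos^2R\,d\Theta^2$ is precisely the round unit-sphere metric written in geographic coordinates, with $R$ the latitude measured from the equator $\{R=0\}$ and $\Theta$ the longitude; its Gauss curvature is identically $1$, and it extends smoothly across the two points $R=\pm\tfrac{\pi}{2}$ (the poles $N_M$, $S_M$ that were excluded when the coordinates $(R,\Theta)$ were introduced), so that $(M,g_M)$ is globally the canonical Riemannian sphere. I would double-check the value $K\equiv1$ of the curvature in two ways: by the standard surface-of-revolution computation, and, as a consistency check, by Theorem \ref{thm: Zoll to Finsler}, since for $h=0$ the Zoll surface is itself the unit round sphere, whence the induced Finsler structure has constant flag curvature $K=1$ and closed geodesics of length $2\pi$, as for great circles.

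Finally I would observe that no degeneracy occurs: setting $h=0$ in \eqref{Gauss curv for Zoll surf} gives $G\equiv1>0$, so by Theorem \ref{thm: G vs indic curv} the curve \eqref{param eq indic ver1} is strongly convex and $F$ is a genuine Finsler (here Riemannian) metric; moreover $G$ being constant forces $G_{\theta 1}=G_{\theta 2}=0$, so the invariants $I$ and $J$ of \eqref{I, J from G & theta} vanish, which by the first remark after Theorem \ref{thm: Zoll to Finsler} is exactly the condition for the construction to produce a Riemannian metric. There is essentially no serious obstacle in this argument; the only step meriting a word of care is the inference from ``each fibre indicatrix is the ellipse $v_1^2+\cos^2R\,v_2^2=1$'' to ``$g_M=dR^2+\cos^2R\,d\Theta^2$'', which is immediate from the definition of the fundamental tensor once one notes that the ellipse is origin-centred with principal axes along $\partial/\partial R$ and $\partial/\partial\Theta$.
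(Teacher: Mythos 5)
Your argument is correct and follows essentially the same route as the paper: both read the statement off the origin-centred ellipse $v_1^2+\cos^2R\,v_2^2=1$ from \eqref{ellipse for h=0} and identify the resulting structure as the round metric (the paper doing the final identification via the shift $\widetilde R=R+\tfrac{\pi}{2}$ to the standard colatitude form, you via the fundamental tensor $\mathrm{diag}(1,\cos^2R)$ and a curvature check). Your explicit verification of $K\equiv 1$, the smooth extension across the poles, and the vanishing of $I,J$ merely fills in details the paper leaves implicit.
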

	\begin{remark}
		Observe that our local coordinates on $M$ are $(R,\Theta)\in [-\frac{\pi}{2},\frac{\pi}{2} ]\times \R\slash 2\pi\mathcal Z$, and that by the coordinates changing $\widetilde{R}:=R+\frac{\pi}{2}$ we obtain $\widetilde{R}\in [0,\pi]$ and hence \eqref{ellipse for h=0} becomes 	$v_1^2+\sin^2R\cdot v_2^2=1$, that is the expected ellipse in the case of the canonical Riemannian sphere. 
	\end{remark}


	\section{Examples}\label{sec:Examples}
	
\subsection{The case when $h$ is polynomial}\label{subsec: h polynomial}

Let us consider the case when the function $x\mapsto h$ is a polynomial of odd order, that is
\begin{equation}\label{the polynom h}
h(x)=\sum_{k=0}^na_{2k+1}x^{2k+1},
\end{equation}
where $n\geq 1$, i.e. the degree of $h$ is greater than two, provided $G>0$, and $\sum_{k=0}^na_{2k+1}=0$, with $G$ given in \eqref{Gauss curv for Zoll surf}. It can be seen that this function $h$ satisfies the conditions needed to induce a Zoll metric on $\Sph^2$, see Section \ref{sec: Zoll surfaces}.

Observe that 
$$
h'(x)=\sum_{k=0}^n (2k+1)a_{2k+1}x^{2k}
$$
and 
$$
h''(x)=
\sum_{k=1}^n 2k(2k+1)a_{2k+1}x^{2k-1}=\sum_{k=0}^{n-1} b_{2k+1}x^{2k+1},
$$
where we put $b_{2k+1}:=2(k+1)(2k+3)a_{2k+3}$, for all integers $k\in \{0,\dots,n-1\}$.

\begin{lemma}
	For a real number $\lambda\geq x$ and any integer $k\in \{0,\dots,n-1\}$, we have
	$$
	\int \sqrt{\lambda^2-x^2}\ x^{2k+1}dx=
	\lambda^{2k}(\lambda^2-x^2)^{3\slash 2}\sum_{\rho=0}^k\frac{(-1)^{\rho+1}}{(2\rho+3)\lambda^{2\rho}}\binom{k}{\rho}(\lambda^2-x^2)^\rho+\mathcal C
	,
	$$
	where $\mathcal C$ is arbitrary constant.
\end{lemma}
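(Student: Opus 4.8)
The plan is to reduce the integral to an elementary power integral by the substitution $u=\lambda^2-x^2$. Since the hypothesis is to be read as $\lambda\geq|x|$ with $\lambda\geq 0$, we have $u\geq 0$ on the interval in question, so $\sqrt{\lambda^2-x^2}$ and all the fractional powers below are well-defined real numbers. Writing $x^{2k+1}\,dx=x^{2k}\cdot x\,dx=(\lambda^2-u)^k\bigl(-\frac{1}{2}\,du\bigr)$ gives
\[
\int\sqrt{\lambda^2-x^2}\,x^{2k+1}\,dx=-\frac{1}{2}\int\sqrt{u}\,(\lambda^2-u)^k\,du .
\]

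Next I would expand $(\lambda^2-u)^k=\sum_{\rho=0}^k\binom{k}{\rho}(-1)^\rho\lambda^{2(k-\rho)}u^\rho$ by the binomial theorem and integrate term by term, using $\int u^{\rho+1/2}\,du=\frac{2}{2\rho+3}\,u^{\rho+3/2}$. This produces
\[
-\frac{1}{2}\sum_{\rho=0}^k\binom{k}{\rho}(-1)^\rho\lambda^{2(k-\rho)}\cdot\frac{2}{2\rho+3}\,u^{\rho+3/2}+\mathcal C .
\]
Factoring $u^{3/2}=(\lambda^2-x^2)^{3/2}$ and $\lambda^{2k}$ out of the sum, and using $\lambda^{2(k-\rho)}=\lambda^{2k}/\lambda^{2\rho}$, $u^{\rho}=(\lambda^2-x^2)^{\rho}$ together with $-\frac{1}{2}\cdot\frac{2}{2\rho+3}\cdot(-1)^\rho=\frac{(-1)^{\rho+1}}{2\rho+3}$, this expression becomes exactly the claimed closed form.

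There is no genuine obstacle here: the only care needed lies in the index bookkeeping, matching signs and powers of $\lambda$ on the two sides, and in recording the hypothesis correctly so that the square roots make sense. As an independent check one may differentiate the claimed antiderivative directly: each summand $(\lambda^2-x^2)^{\rho+3/2}$ has derivative $-(2\rho+3)\,x\,(\lambda^2-x^2)^{\rho+1/2}$, so the factors $2\rho+3$ cancel and, using $\bigl(\lambda^2-(\lambda^2-x^2)\bigr)^k=x^{2k}$, the derivative of the right-hand side collapses to
\[
x\sqrt{\lambda^2-x^2}\sum_{\rho=0}^k\binom{k}{\rho}(-1)^\rho\lambda^{2(k-\rho)}(\lambda^2-x^2)^\rho = x^{2k+1}\sqrt{\lambda^2-x^2},
\]
as required. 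An alternative route would be induction on $k$ via a reduction formula from integration by parts, but the substitution above is the most direct.
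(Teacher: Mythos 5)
Your proposal is correct and follows essentially the same route as the paper: the substitution $u=\lambda^2-x^2$, binomial expansion of $(\lambda^2-u)^k$, and term-by-term integration of $u^{\rho+1/2}$. The extra verification by direct differentiation is a nice sanity check but not needed.
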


Indeed, if we use the substitution $u:=\lambda^2-x^2$, it follows
$$
x^{2k}=(\lambda^2-u)^k=\sum_{\rho=0}^k(-1)^\rho\binom{k}{\rho}\lambda^{2(k-\rho)}u^\rho,
$$
and hence, we have
\begin{equation}
\begin{split}
&\int \sqrt{\lambda^2-x^2}\ x^{2k+1}dx
=-\frac{1}{2}\int \sqrt{u}(\lambda^2-u)^kdu\\
&=\frac{1}{2}\sum_{\rho=0}^k(-1)^{\rho+1}\binom{k}{\rho}\lambda^{2(k-\rho)}\int \sqrt{u}\ u^\rho du\\
&=\lambda^{2k}\sum_{\rho=0}^k\frac{(-1)^{\rho+1}}{(2\rho+3)\lambda^{2\rho}}\binom{k}{\rho}(\lambda^2-x^2)^{\rho+3\slash 2}
+\mathcal C,
\end{split}
\end{equation}
where $\mathcal C$ is arbitrary constant.

Next, we compute
\begin{equation}
\begin{split}
&\int \sqrt{\lambda^2-x^2}\ h''(x)dx=\sum_{k=0}^{n-1}b_{2k+1}\int \sqrt{\lambda^2-x^2}\ x^{2k+1}dx\\
&=(\lambda^2-x^2)^{3\slash 2}\sum_{k=0}^{n-1}b_{2k+1}\lambda^{2k}\sum_{\rho=0}^k\frac{(-1)^{\rho+1}}{(2\rho+3)\lambda^{2\rho}}\binom{k}{\rho}(\lambda^2-x^2)^\rho+\mathcal C
,
\end{split}
\end{equation}
where $\mathcal C$ is an arbitrary constant.

Moreover, we have
\begin{equation}
\begin{split}
&\int_R^r \sin s\sqrt{\sin^2s-c^2}\ h''(\cos s)ds
=-\int_{\cos R}^{\cos r}\sqrt{\cos^2R-x^2}\ h''(x)dx,   \\
&\end{split}
\end{equation}
where we have used $c=\sin R$ and the substitution $x=\cos s$. 

It follows 
\begin{equation}
\begin{split}
&\int_R^r \sin s\sqrt{\sin^2s-c^2}\ h''(\cos s)ds\\
&=   (\cos^2R-\cos^2r)^{3\slash 2}\sum_{k=0}^{n-1}b_{2k+1}\cos^{2k}R\sum_{\rho=0}^k\frac{(-1)^{\rho}}{(2\rho+3)\cos^{2\rho}R}\binom{k}{\rho}(\cos^2R-\cos^2r)^\rho\\
&= \cos^3R\ v_1^3 \sum_{k=0}^{n-1}c_{2k+1}
\sum_{\rho=0}^kd_\rho v_1^{2\rho}
\end{split}
\end{equation}
where we use $\sin^2r-c^2=\cos^2R-\cos^2r=\cos^2 R\ v_1^2$, and put $c_{2k+1}:=b_{2k+1}\cos^{2k}R$, and 
$d_\rho:=\frac{(-1)^{\rho}}{(2\rho+3)}\binom{k}{\rho}$.

On the other hand, observe that
\begin{equation}
\begin{split}
\frac{1+h(x)}{\cos^2R}x\frac{\cos^2R-x^2}{\cos^2R}h'(x)&=\frac{x}{\cos^2R}+\frac{1}{\cos^2R}\sum_{k=0}^na_{2k+1}\Bigl[
(2k+1)\cos^2R-2kx^2
 \Bigr]x^{2k}\\
 &=\pm\frac{\sqrt{1-v_1^2}}{\cos R}+\sum_{k=0}^na_{2k+1}\cos^{2k}R(1+2kv_1^2)(1-v_1^2)^k,
\end{split}
\end{equation}
where we have used $x:=\cos r$ and hence $x^2=\cos^2R(1-v_1^2)$. 

We obtain
\begin{theorem}\label{thm: implicit equation for h(x) polyn}
	In the case when $h$ is the odd polynomial of degree $2n+1$ given in \eqref{the polynom h} satisfying $G>0$, then the implicit equation of the Finsler indicatrix is
	\begin{equation}
	\begin{split}
		v_2&=\pm\frac{\sqrt{1-v_1^2}}{\cos R}-\sum_{k=0}^na_{2k+1}\cos^{2k}R(1+2kv_1^2)(1-v_1^2)^k\\
		&\pm\cos^2Rv_1^4\sum_{k=0}^{n-1}b_{2k+1}\cos^{2k}R
		\sum_{\rho=0}^k\frac{(-1)^{\rho}}{(2\rho+3)}\binom{k}{\rho} v_1^{2\rho},
	\end{split}
	\end{equation}
	where we put $b_{2k+1}:=2(k+1)(2k+3)a_{2k+3}$, for all integers $k\in \{0,\dots,n-1\}$.
\end{theorem}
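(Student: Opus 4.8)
The plan is to assemble the auxiliary identities established in the lines immediately preceding the statement. The starting point is the singularity-free form of the parametric equations of the indicatrix: from Theorem~\ref{thm: indicatrix param eqs} and the Remark that follows it, writing $x=\cos r$ and $c=\sin R$ one has
\begin{equation*}
v_2=-[1+h(x)]\frac{x}{\cos^2 R}-\frac{\cos^2 R-x^2}{\cos^2 R}\,h'(x)-\frac{\sqrt{\sin^2 r-c^2}}{\cos^2 R}\int_R^r\sin s\sqrt{\sin^2 s-c^2}\,h''(\cos s)\,ds,
\end{equation*}
while the first line of \eqref{param eq indic ver1}, $v_1=\pm\sqrt{\sin^2 r-c^2}/\cos R$, supplies the substitution rules $\sin^2 r-c^2=\cos^2 R-\cos^2 r=\cos^2R\,v_1^2$, hence $x^2=\cos^2R(1-v_1^2)$ and $\sqrt{\sin^2 r-c^2}/\cos^2R=\pm v_1/\cos R$. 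Eliminating $r$ in favour of $v_1$ through these rules is precisely what turns the parametrization into the asserted implicit equation.

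First I would treat the two polynomial terms. Expanding $h$ and $h'$ from \eqref{the polynom h}, the $x^{2k+2}$ contributions cancel and $\dfrac{[1+h(x)]x+(\cos^2R-x^2)h'(x)}{\cos^2R}$ reduces to $\dfrac{x}{\cos^2R}+\sum_{k=0}^n a_{2k+1}x^{2k}\dfrac{(2k+1)\cos^2R-2kx^2}{\cos^2R}$; substituting $x^2=\cos^2R(1-v_1^2)$ turns the coefficient of $a_{2k+1}$ into $\cos^{2k}R(1+2kv_1^2)(1-v_1^2)^k$ and the leading term into $\pm\sqrt{1-v_1^2}/\cos R$. This is exactly the polynomial identity displayed just above the theorem, so the first two terms of $-v_2$ become $\pm\frac{\sqrt{1-v_1^2}}{\cos R}+\sum_{k=0}^n a_{2k+1}\cos^{2k}R(1+2kv_1^2)(1-v_1^2)^k$.

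Next I would treat the integral term by means of the Lemma. Since $h''(x)=\sum_{k=0}^{n-1}b_{2k+1}x^{2k+1}$ with $b_{2k+1}=2(k+1)(2k+3)a_{2k+3}$, applying the Lemma with $\lambda=\cos R$ together with the substitution $x=\cos s$ (the endpoint $s=R$, where $\cos^2R-x^2=0$, contributes nothing because of the overall factor $(\lambda^2-x^2)^{3/2}$) gives, after using $\cos^2R-\cos^2r=\cos^2R\,v_1^2$ to simplify the result, the identity $\int_R^r\sin s\sqrt{\sin^2 s-c^2}\,h''(\cos s)\,ds=\cos^3R\,v_1^3\sum_{k=0}^{n-1}b_{2k+1}\cos^{2k}R\sum_{\rho=0}^k\frac{(-1)^\rho}{2\rho+3}\binom{k}{\rho}v_1^{2\rho}$ recorded before the theorem. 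Multiplying this by $-\sqrt{\sin^2 r-c^2}/\cos^2R=\mp v_1/\cos R$ produces the last summand $\pm\cos^2R\,v_1^4\sum_{k=0}^{n-1}b_{2k+1}\cos^{2k}R\sum_{\rho=0}^k\frac{(-1)^\rho}{2\rho+3}\binom{k}{\rho}v_1^{2\rho}$, and adding the three pieces yields the stated implicit equation.

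The only point requiring care is the sign bookkeeping. Both $\sqrt{\sin^2 r-c^2}$, whose sign relative to $v_1$ is the sign of $\dot r$, and $\cos r=\pm\cos R\sqrt{1-v_1^2}$ carry an independent $\pm$, and one must check that these are organised consistently into the two $\pm$'s appearing in the statement, which correspond to the two branches of the convex closed indicatrix lying over a given value of $v_1$. There is no analytic subtlety: since we start from the singularity-free expression for $v_2$, the apparent singularities at $r=\pi/2$ (i.e.\ $x=0$) and at $R=r_c$ (i.e.\ $v_1=0$) never appear, so beyond this sign tracking the proof is nothing more than the polynomial and binomial algebra already carried out in the paragraphs preceding the theorem.
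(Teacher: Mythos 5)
Your proposal is correct and follows essentially the same route as the paper: it assembles the singularity-free expression for $v_2$ from the Remark after Theorem \ref{thm: indicatrix param eqs}, the algebraic reduction of the $h$, $h'$ terms under $x^2=\cos^2R(1-v_1^2)$, and the Lemma with $\lambda=\cos R$, $x=\cos s$ for the $h''$ integral, exactly as in the computations preceding the theorem. The only slight imprecision is the phrase that the $x^{2k+2}$ contributions ``cancel'' (they combine into the $-2k\,a_{2k+1}x^{2k+2}$ term), but the displayed reduction you give is the correct one, so nothing is affected.
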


We remark that the sums in the right hand side looks like $2n+2$-polynomials, but it is easy to see that actually they are only $2n$-degree polynomials. We give here only the general form.


	

	\subsection{Example 1. The case $h(x)=\ve(1-x^2)x$}\label{subsec: Eg. 1}
	
	We will consider the case 
	$$
	h:[-1,1]\to (-1,1),\quad h(x)=\ve(1-x^2)x^{2n+1},
	$$
	$n$ nonnegative integer, $\ve$ small positive constant
	that obviously satisfies all conditions needeed, including positiv sectional curvature. 
	
	For the sake of simplicity, we will consider the case 
\begin{equation}
	h:[-1,1]\to (-1,1),\quad h(x)=\ve(1-x^2)x,
\end{equation}	
with $0<\ve<\frac{1}{2}$. Observe that for this $h$, we have
$$
h'(x)=\ve(1-3x^2),\quad h''(x)=-6\ve x,
$$
and the Gauss curvature 
\begin{equation}\label{G in eg1}
G(x)=-\frac{2\ve x^3+1}{(\ve x^3-\ve x-1)^3}
\end{equation}
 of $(\Lambda,g)$ is a smooth function, with only one real zero at $x=-\frac{\sqrt[3]{4\ve^2}}{2\ve}$ taking the value 1 at $x=0$, and $G(-1)=1-2\ve$, 
that is $G>0$ for any $x\in[-1,1]$ and any $\ve<\frac{1}{2}$.

	In this case from Theorem \ref{thm: implicit equation for h(x) polyn}
	we have
	\begin{equation}\label{ex1:implicit formula}
	\frac{1-v_1^2}{\cos^2R}=(-v_2+\ve v_1^2\cos^2R  -\ve c^2)^2.
	\end{equation}
	
		For obtaining the algebraic equation in $F$, the fundamental function of this Finsler space, we simply substitute $v_i$ by $\frac{v_i}{F}$, $i=1,2$ since this is the indicatrix equation  where $F=1$. It follows 
		$$
		\frac{F^2-v_1^2}{\cos^2 R}=\frac{(\ve c{^2F^2-\ve v_1^2\cos^2R+Fv_2})^2}{F^2},
		$$
		and from here we obtain the 4-th order equation in $F$, whose solution gives the explicit form of the desired Finsler metric. Since the formulas are quite complicated, instead of writing them explicitely here (this can be done very easily) with the substitution $c=\sin R$, the implicit equation \eqref{ex1:implicit formula} can be used to obtain some graphical representations of the indicatrices for different values of the parameter $\ve$ at different points of $M$.
		
		The invariants $I$ and $J$ of this Finsler surface can be easily be obtained using \eqref{G in eg1} and \eqref{I, J from G & theta}.

\begin{figure}[H]
	\centering
	\begin{subfigure}[b]{0.3\textwidth}
		\includegraphics[width=\textwidth]{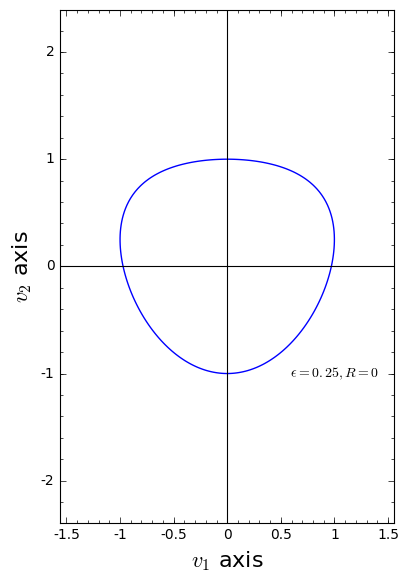}
	\end{subfigure}\quad
	\begin{subfigure}[b]{0.3\textwidth}
	\includegraphics[width=\textwidth]{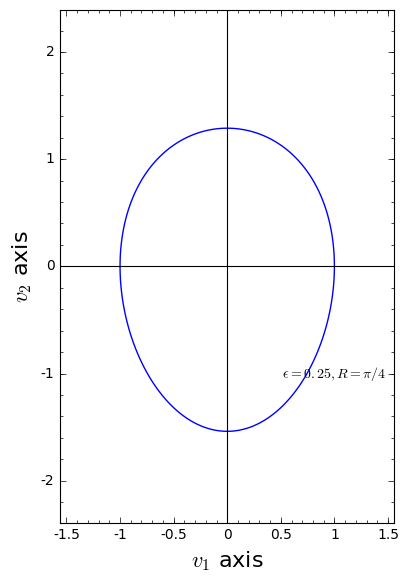}
	\end{subfigure} \\
	\begin{subfigure}[b]{0.3\textwidth}
	\includegraphics[width=\textwidth]{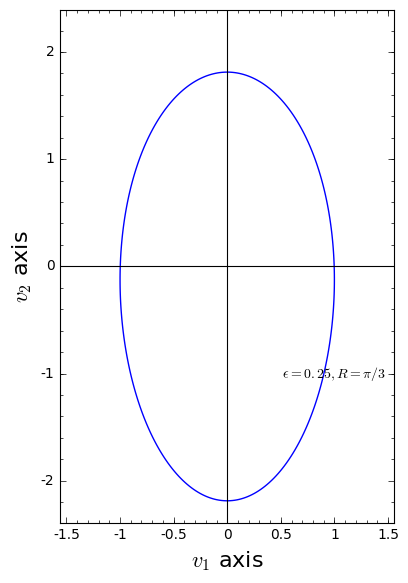}
	\end{subfigure}\quad
	\begin{subfigure}[b]{0.3\textwidth}
		\includegraphics[width=\textwidth]{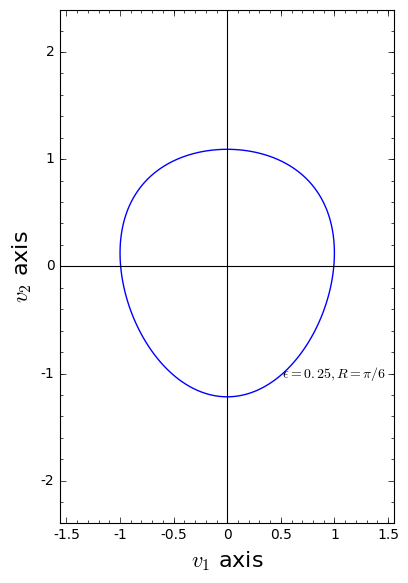}
	\end{subfigure}\quad
	\caption{Indicatrices in Example 1 for $\ve=0.25$.}
\end{figure}
\begin{figure}[H]
	\centering
	\begin{subfigure}[b]{0.3\textwidth}
		\includegraphics[width=\textwidth]{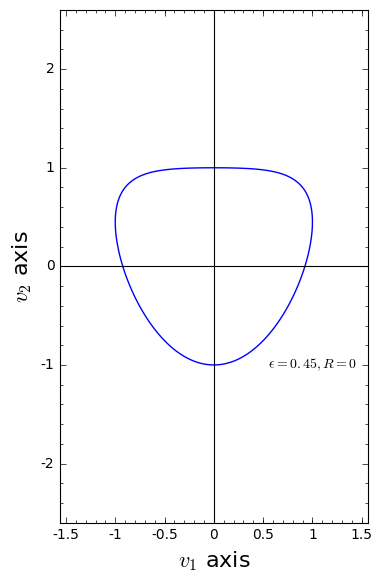}
	\end{subfigure}\quad
	\begin{subfigure}[b]{0.3\textwidth}
		\includegraphics[width=\textwidth]{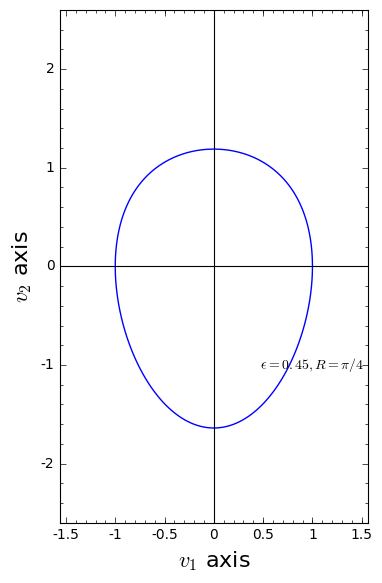}
	\end{subfigure} \\
	\begin{subfigure}[b]{0.3\textwidth}
		\includegraphics[width=\textwidth]{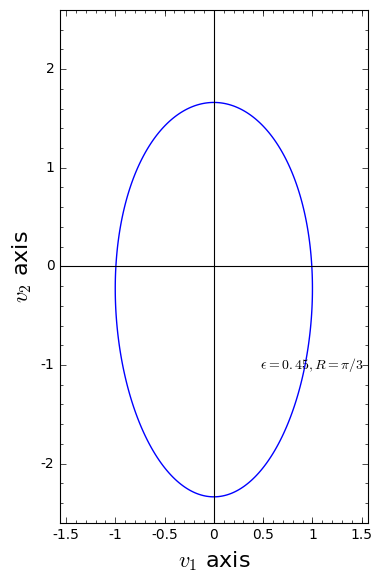}
	\end{subfigure}\quad
	\begin{subfigure}[b]{0.3\textwidth}
		\includegraphics[width=\textwidth]{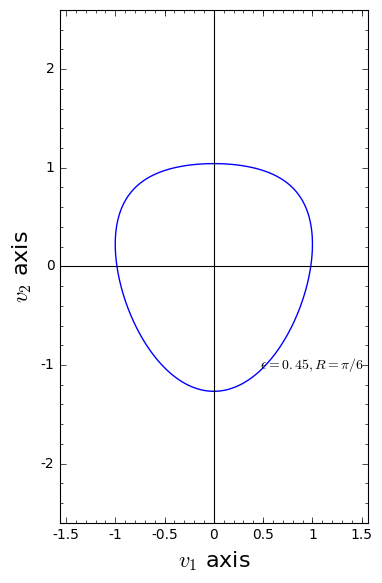}
	\end{subfigure}\quad
	\caption{Indicatrices  in Example 1 for $\ve=0.45$.}
	\label{fig: indicatrices eg 1}
\end{figure}


\subsection{Example 2. The case $h(x)=x(1-x^2)^2$}\label{subsec: Eg. 2}

Another possible choice is the case 
$$
h:[-1,1]\to (-1,1),\quad h(x)=x(1-x^2)^n,
$$
$n>1$ integer 
that obviously satisfies all conditions needeed, including positiv sectional curvature. For the sake of simplicity, we will consider the case 
\begin{equation}
h:[-1,1]\to (-1,1),\quad h(x)=x(1-x^2)^2.
\end{equation}	

Observe that for this $h$, we have
$$
h'(x)=1-6x^2+5x^4,\quad h''(x)=-12x+20x^3,
$$
and the Gauss curvature
\begin{equation}\label{G in eg2}
G(x)=\frac{1+h(x)-xh'(x)}{[1+h(x)]^3}=
\frac{1+4x^3-4x^5}{(1+x-2x^3+x^5)^3}.
\end{equation}

It can be easily seen that $G(x)>0$ for $x\in [-1,1]$. Indeed, observe that $G(-1)=G(1)=1$, and that the critical points and the critical values of the function $G:[-1,1]\to \R$ are 
\begin{equation*}
\begin{split}
&x_0=0.33,\quad G(x_0)=0.56\\
&x_1=0.88,\quad G(x_1)=1.42\\
&x_2=-0.35,\quad G(x_2)=2.18\\
&x_3=-0.81,\quad G(x_3)=0.36,
\end{split}
\end{equation*}
and since $G$ is continuous function it follows that cannot take nonpositive values. 

In this case from Theorem \ref{thm: implicit equation for h(x) polyn}
we have
\begin{equation}
\frac{1-v_1^2}{\cos^2R}=\frac{1}{9} \, {\Bigl[v_{1}^{4} \cos^{4}R + 6 \, v_{1}^{2} \cos^{2}R \ \sin^{2}R - 3 \, \sin^{4}R - 3 \, v_{2}\Bigr]}^{2} 
\end{equation}
that leads to a 8-th order polynomial in $F$. 

Again, the invariants $I$ and $J$ of this Finsler surface can be easily be obtained using \eqref{G in eg2} and \eqref{I, J from G & theta}.


\end{document}